\newcommand{\blind}{0}
\DeclareMathAlphabet{\mathantt}{OT1}{antt}{m}{n}
\newcommand*{\LargerCdot}{{\raisebox{-0.5ex}{\scalebox{1.8}{$\cdot$}}}}
\newcommand*{\dist}{\ensuremath{\mathantt{K}}\xspace} 
\newcommand*{\rand}[1]{\ensuremath{\mathbf{#1}}\xspace} 
\newcommand*{\dens}{\ensuremath{\mathantt{p}}\xspace} 
\newcommand*{\densk}{\raisebox{0mm}{\ensuremath{\mathantt{k}}}\xspace} 
\newcommand*{\moveprob}{\ensuremath{\mathantt{q}}\xspace} 
\newcommand*{\acc}{\ensuremath{\mathantt{a}}\xspace} 
\newcommand*{\bcc}{\ensuremath{\mathantt{b}}\xspace} 
\newcommand*{\ratio}{\ensuremath{\mathantt{r}}\xspace} 
\newcommand*{\prob}[1]{\ensuremath{\mathbb{P}\left(#1\right)}\xspace} 
\newcommand*{\expec}[1]{\ensuremath{\mathbb{E}\left\{#1\right\}}\xspace} 
\newcommand{\ind}[1]{\raisebox{-0.3mm}{\scalebox{1.2}{\ensuremath{\mathds{1}}}}\ensuremath{ \left\{ #1\right\}}}
\newcommand*{\bmid}{\ensuremath{\ \Big\vert\ }} 
\newcommand*{\bbmid}{\ensuremath{\ \Bigg\vert\ }} 
\newcommand*{\rel}{\ensuremath{\mathfrak{R}}}
\newcommand*{\dimens}[1]{\ensuremath{\text{dim}\left(#1\right)}\xspace}
\newcommand*{\cp}{changepoint\xspace}
\newcommand*{\cps}{changepoints\xspace}
\newcommand*{\states}{\ensuremath{\mathcal{S}}\xspace}
\newcommand*{\ustates}{\ensuremath{\mathcal{U}}\xspace}
\newcommand*{\moves}{\ensuremath{\mathcal{M}}\xspace}
\newcommand*{\spaces}{\ensuremath{\mathcal{D}}\xspace}
\newcommand*{\mcal}[1]{\ensuremath{\mathcal{#1}}\xspace}
\newcommand*{\bo}[1]{\ensuremath{\mathbf{#1}}\xspace}
\newcommand*{\semic}{\ensuremath{\mathbin{\boldsymbol;}}\xspace} 
\newcommand*{\ellr}{{\ensuremath{r_\ell}}\xspace} 
\newcommand*{\kr}{{\ensuremath{r_k}}\xspace} 
\newcommand*{\todo}[1]{}
\newcommand*{\funcg}{\ensuremath{\mathantt{g}}\xspace}
\newcommand*{\defeq}{\mathrel{\rlap{\raisebox{0.3ex}{$\m@th\cdot$}}\raisebox{-0.3ex}{$\m@th\cdot$}}=}
\newtheorem{algo}{Algorithm}{\bf}{\rm}
\newtheorem{myremark}{Remark}{\bf}{\rm}
\newtheorem{definition}{Definition}{\bf}{\rm}
{\bf}{\rm}
\newtheorem{lemma}{Lemma}{\bf}{\rm}
{\bf}{\rm}
{\bf}{\rm}
\newtheorem{corollary}{Corollary}{\bf}{\rm}
{}{}
\begin{document}

\def\spacingset#1{\renewcommand{\baselinestretch}%
	{#1}\small\normalsize} \spacingset{1}


\if0\blind
{  
	\title{\bf A note on the Metropolis-Hastings acceptance probabilities for mixture spaces}
	\author{Tobias Siems\thanks{The author gratefully acknowledges support from the Landesgraduiertenf\"orderung, Greifswald.}
		, Lisa Koeppel \\ \smallskip
		Department of Mathematics and Computer Science\\
		University of Greifswald\\\\
	\underline{\textbf{This document is in draft stage and hasn't been submitted to a journal yet!}}
}
	\maketitle
} \fi

\if1\blind
{
	\bigskip
	\bigskip
	\bigskip 
	\begin{center}
		{\LARGE\bf Title}
	\end{center}
	\medskip
} \fi
\bigskip
\begin{abstract}
This work is driven by the ubiquitous dissent over the abilities and contributions of the Metropolis-Hastings and reversible jump algorithm within the context of trans dimensional sampling.
We demystify this topic by taking a deeper look into the implementation of Metropolis-Hastings acceptance probabilities with regard to general mixture spaces.
Whilst unspectacular from a theoretical point of view, mixture spaces gave rise to challenging demands concerning their effective exploration.
An often applied but not extensively studied tool for transitioning between distinct spaces are so-called translation functions.
We give an enlightening treatment of this topic that yields a generalization of the reversible jump algorithm and unveils another promising translation technique.
Furthermore, by reconsidering the well-known Metropolis within Gibbs paradigm, we come across a dual strategy to develop Metropolis-Hastings samplers.
We underpin our findings and compare the performance of our approaches by means of a change point example.
Thereafter, in a more theoretical context, we revitalize the somewhat forgotten concept of maximal acceptance probabilities.
This allows for an interesting classification of Metropolis-Hastings algorithms and gives further advice on their usage.
A review of some errors in reasoning that have led to the aforementioned dissent concludes this paper.
\end{abstract}

\noindent%
{\it Keywords: reversible jump algorithm, trans dimensional sampling, mixture proposals, change point samples, maximal acceptance probabilities}  
\vfill

\newpage
\spacingset{1.45} 
\sloppy
\section{Introduction}

The foundation of Markov chain Monte Carlo (MCMC) sampling is that under some circumstances Markov chains converge towards their invariant distribution regardless of the initial state \citep{tierney1994}. 
Therefore, MCMC methods provide schemes to build Markov kernels with a desired invariant distribution which is typically an intractable marginal or conditional, or highly multivariate distribution.

The Metropolis-Hastings algorithm is one of the most well-known MCMC methods.
It traverses through the state space by means of a user defined proposal distribution.
Each proposed state undergoes an accept-reject step which decides whether the proposed state or the previous link in the chain is chosen to be the next link.
This step alone secures the invariance of the Metropolis-Hastings Markov kernel towards the target distribution.



Originally, Metropolis-Hastings proposals were designed conveniently through kernel densities w.r.t. the same measure that runs the density of the target distribution \citep{metropolis1953, hastings1970}.
In this case, the acceptance probability used in the accept-reject step is determined by the likelihood ratio w.r.t. the transition in backward and forward direction.
However, upcoming applications like variable selection \citep{spike_slab_mitchel}, point processes \citep{geyer1994} and change point analysis \citep{exact_fearnhead, siems18} raised higher demands.

Common for these applications is that the elements of the state spaces are inhomogeneous in their dimension, i.e. the spaces are trans dimensional.
Obviously, such mixtures of different spaces are not straightforwardly accessible through standard techniques like random walk proposals.
The first methods to conduct a change in dimension were plain births and deaths \citep{geyer1994}.
However, this is prone to ignore the relations shared among several coordinates and therewith promotes poor acceptance rates.
Consequently, the exploration of the entire state space performs differently within the same and across the dimensions, which impairs the overall mixing time.

Subsequently, it became utterly popular to utilize functions that convey between points of different dimensions.
\cite{green1995} pioneered this approach through the reversible jump algorithm.
It was developed for purely continuous spaces and describes a particular class of proposals that act detached from the target space.
As a result, the density of the target distribution and the kernel density of the proposal do not necessarily share the same underlying measure anymore.
Nevertheless, \cite{green1995} was able to maintain the invariance of the target distribution through a nifty application of the change of variable theorem.

According to Google scholar, \cite{green1995} has been cited over 5000 times until now.
Unfortunately, \cite{green1995} and others caused a misperception about the abilities of the Metropolis-Hastings algorithm that is nowadays ubiquitous.
It is agreed among a significant number of scientific writings that the reversible jump algorithm has somehow made trans dimensional sampling possible and that other MCMC algorithms like Metropolis-Hastings are not applicable in these scenarios.
\cite{green1995}, \cite{carlin1995} and \cite{chen2012} even substantiate this wrong conclusion, which was acknowledged by \cite{besag2001markov, waagepetersen2001tutorial, green2003, green2009, sisson2005, sambridge2006} and many others.
A thorough search for papers which oppose this stance indirectly, in one way or another, revealed only \cite{geyer1994, tierney1998, andrieu2001model, godsill2001, jannink2004} and \cite{roodaki2011}.

This poses a significant division within the MCMC community that requires a thorough treatment.
Furthermore, the outshining popularity of the reversible jump algorithm makes this inspiring statistical area seem monotonous and boring.
Therefore, this paper takes up on the task of exploring general mixture spaces by means of the Metropolis-Hastings paradigm, whereby the very focus lies on the computation of acceptance probabilities.
We will see that the foundation of this computation in its most theoretical to most practical form is the compatibility of the involved densities towards a common underlying measure.

For example, the generalized Metropolis-Hastings algorithm from \cite{tierney1998}, that is applicable to virtually any combination of target distribution and proposal, derives the required densities through the Radon-Nikodym theorem \citep{nikodym1930generalisation}.
Unfortunately, the nonconstructive nature of this theorem only gives an approach of little practical value.

We will see that, subject to the aforementioned compatibility, the design of proposals comprises a trade-off between mutability and feasibility.
Thus, each of the following methods exhibits its very own conditions and possibilities that need to be considered carefully in order to build the right Metropolis-Hastings sampler.

As has already been done before, we examine so-called translation functions that convey between pairs of spaces.
However, we consider two different ways to apply them: before and after proposing new states and refer to these concepts as ad-hoc and post-hoc translations, respectively.

In accordance with the reversible jump algorithm, the main feature of post-hoc translations is the detachment of the proposal from the space the target distribution acts on.
However, this entails an integral transformation which imposes strict conditions and requirements on the translation functions.
Consequently, a sophisticated design in practical and mathematical terms is essential here.

In return, post-hoc translations support parsimony regarding the number of random variables that need to be generated.
This enables very tightly adapted and even deterministic transitions.
Furthermore, they grant access to certain proposals that define intractable compound distributions like convolutions, marginal distributions or factor distributions.

In contrast, ad-hoc translations are much less restricted and support arbitrary translation functions.
We may for example exploit the likelihood of the target distribution to improve the acceptance rates of difficult transitions.
However, they require that the proposal kernel densities comply with the underlying measure space.
Thus, the freedom in choosing translation functions comes at the prize of a smaller adaptability in terms of the generation of random states.

Whilst the name ad-hoc results from the ability to use simple and purposive translation functions, the term post-hoc refers to the moment when the translation function is applied.


%
%

The well-known Metropolis within Gibbs approach confines the set of possible transitions through conditioning 
and is therewith able to steer the exploration of the state space in specific ways.
Consequently, it employs conditional densities defined on suitable measure spaces.
It turns out that proposals set up directly on these measure spaces yield straightforwardly to compute acceptance probabilities.

This shifts the challenges of creating a Metropolis-Hastings sampler to deriving conditional densities of the target distribution.
We consider this in some way novel and innovative perspective to be dual to the traditional one that puts the definition of proposals first.

Metropolis within Gibbs is applicable in the usual way for coordinate spaces and therewith capable of relaxing the terms of ad-hoc translations.
However, it also captures far more difficult cases like so-called semi deterministic translations.

Our methods are scrutinized by means of a \cp example.
For this sake, we compare the acceptance rates of several birth and death like proposals.
It turns out that sophisticated ad-hoc proposals (combined with Metropolis within Gibbs) or post-hoc proposals are vital here.
While the post-hoc translations are derived from delicate properties of the target distribution, the ad-hoc translations just trace for high likelihood values.
Despite their differences, both methods achieve comparable acceptance rates.

The notion of maximal acceptance probabilities as introduced by \cite{peskun1973optimum} and \cite{tierney1998} doesn't seem to have gained a lot of attention so far.
The reason for this might be that there is usually only one choice for the acceptance probability in place.
We revitalize this dusted property and show that post-hoc translations do not necessarily yield maximal acceptance probabilities.
Therewith, they may sometimes differ from their maximal counterpart.

Transitions between different pairs of spaces are usually performed by separate proposals which are combined to a mixture proposal.
This allows for a clear modular design of proposals and is thus pursued primarily in this paper.

As stated in \cite{tierney1998} already, acceptance probabilities can either be computed based on unique pairs of the individual proposals or from the mixture proposal as a whole.
Albeit unintentionally, \cite{roodaki2011} elaborates conditions upon when both approaches yield maximality.
We will investigate their main result and put them into our context.

This paper is structured as follows.
At first, we consider the Metropolis-Hastings approach in applied terms in Section \ref{chap:mh}.
This involves an introduction of the important detailed balance condition together with the primal Metropolis-Hastings algorithm.
Furthermore, in Section \ref{chap:mixspaces} we talk about mixture spaces, mixture proposals and translations.
Section \ref{chap:metgibs} is concerned with the Metropolis within Gibbs approach.
Subsequently, we elaborate a \cp example in Section \ref{chap:cpexample}.
The general consideration of Metropolis-Hastings and maximal acceptance probabilities is pursued in Section \ref{chap:genmh}.
Section \ref{chap:mixtures} further transfers these observations to mixture proposals.
We conclude with a discussion and a brief literature review in Section \ref{chap:discussion}.

\todo{all measures sigma finite and standard Borel spaces}

\section{The Metropolis-Hastings algorithm in applied terms}
\label{chap:mh}

In this section, we deal with practical implementations of the Metropolis-Hastings algorithm.
To this end, we develop ways to design proposals based on kernel densities w.r.t. sigma finite measure spaces.
These comprise so-called ad-hoc and post-hoc translations and a new perspective for the Metropolis within Gibbs framework.

The Metropolis-Hastings algorithm is usually introduced on the basis of densities w.r.t. a common measure.
For this sake, we are given a sigma finite measure space $(\states,\mcal{A},\lambda)$ and a probability measure $\pi$ with a density $\dens$ such that $\pi(ds)=\dens(s)\lambda(ds)$.
The user provides a Markov kernel $\dist$ from $(\states,\mcal{A})$ to $(\states,\mcal{A})$
in form of a kernel density $\densk(s', s)$ such that $\dist(s', ds)=\densk(s',s)\lambda(ds)$.
This Markov kernel is referred to as the \emph{proposal} and $\densk$ as the \emph{proposal kernel density}.

\begin{algo}[Metropolis-Hastings]
	\label{alg:mh}
	(I) Choose an initial state $s_0\in\states$ (II) In step $k=1,2,....$, given the previous state $s_{k-1}=s'$, propose a new state $s$ according to $\dist(s', \LargerCdot)$ and set $s_k=s$ with probability
	\begin{align}
	\label{eq:acc}
	\acc_{s' s}=\min\Bigg\{1, \frac{\dens(s)\densk(s,s')}{\dens(s')\densk(s',s)}\Bigg\}
	\end{align}
	otherwise $s_k=s'$. We set $\acc_{s', s}$ to zero wherever it is not defined.
\end{algo}
We refer to (\ref{eq:acc}) as the \emph{acceptance probability} and to the second argument within the braces as the \emph{acceptance ratio}.

Let further
\begin{align}
\label{eq:mhtranskern}
\mu(s',\LargerCdot)=\int \acc_{s' s}\delta_{s}(\LargerCdot)+\left(1-\acc_{s's}\right)\delta_{s'}(\LargerCdot)\dist(s', ds)
\end{align}
$\mu$ represents the Markov kernel whose application constitutes step (II) of Algorithm \ref{alg:mh}.	
Consequently, executing Algorithm \ref{alg:mh} corresponds to simulating successive links of a Markov chain with initial state $s_0$ and Markov kernel $\mu$.

The crucial point here is that the distributions of these links converge to $\pi$, subject to some conditions \citep{tierney1998}.
Thus, after simulating the chain for a sufficiently long time, the last link can be regarded as an approximate sample of $\pi$.

For us, the most important of these conditions is that $\pi$ has to be an invariant distribution of $\mu$.
Thus, in the following, we will elaborate different acceptance probabilities and Metropolis-Hastings Markov kernels, whereby our goal will always be to show the invariance of the target distribution towards the Metropolis-Hastings Markov kernel.
These proves are facilitated by means of the following condition.
\begin{definition}
	The Markov kernel $\mu$ preserves the \emph{detailed balance} condition w.r.t. $\pi$ if
	\begin{align*}
	\pi\otimes \mu(A\times B)=\pi\otimes \mu(B\times A)
	\end{align*}
	for all $A,B\in\mcal{A}$. 
\end{definition}

If $\mu$ preserves the detailed balance condition w.r.t. $\pi$, $\pi$ is an invariant distribution of $\mu$ since $\pi\otimes \mu(\Omega\times \LargerCdot)=\pi\otimes \mu(\LargerCdot\times \Omega)=\pi$.
The opposite implication does not hold in general.

Markov chains build from Markov kernels that preserve the detailed balance w.r.t. another distribution are called \emph{reversible}.
This is because, they exhibit same probabilities in forward and backward direction once a link in the chain follows the law of this distribution.
Consequently, MCMC methods that preserve the detailed balance condition are also called reversible.

\begin{lemma}[\cite{hastings1970}]
	\label{lem:mh}
	The Markov kernel $\mu$ of Equation (\ref{eq:mhtranskern}) with the acceptance probability shown in (\ref{eq:acc})  preserves the detailed balance w.r.t. $\pi$.
\end{lemma}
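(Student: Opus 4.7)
The plan is to unfold the detailed balance equation $\pi\otimes\mu(A\times B) = \pi\otimes\mu(B\times A)$ into explicit double integrals against $\lambda\otimes\lambda$, isolate a trivially symmetric contribution coming from the reject step, and reduce the remaining ``accept'' contribution to a pointwise identity that follows directly from the $\min$ structure in (\ref{eq:acc}).

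First I would split the expression for $\mu(s',\cdot)$ in (\ref{eq:mhtranskern}) into its two summands. The second (``reject'') summand is supported on $\{s'\}$, so its contribution to $\pi\otimes\mu(A\times B)$ equals
\begin{equation*}
\int_{A\cap B}\Big(1-\int \acc_{s's}\,\densk(s',s)\,\lambda(ds)\Big)\,\dens(s')\,\lambda(ds').
\end{equation*}
Since this depends on $A$ and $B$ only through $A\cap B$, it is invariant under the swap $A\leftrightarrow B$ and therefore cancels between the two sides of the detailed balance equation.

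What remains is the ``accept'' contribution, and it suffices to show that
\begin{equation*}
\int\!\!\int \mathds{1}_A(s')\,\mathds{1}_B(s)\,\acc_{s's}\,\densk(s',s)\,\dens(s')\,\lambda(ds)\,\lambda(ds')
\end{equation*}
is symmetric in $A$ and $B$. Relabelling the dummy variables $s\leftrightarrow s'$ in the $A\leftrightarrow B$ swapped version and invoking Fubini (the integrand is nonnegative and measurable), this reduces to the pointwise identity
\begin{equation*}
\acc_{s's}\,\densk(s',s)\,\dens(s') \;=\; \acc_{ss'}\,\densk(s,s')\,\dens(s)
\end{equation*}
for $\lambda\otimes\lambda$-almost every $(s',s)$.

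Finally I would verify this identity by case analysis on the sign of $\dens(s')\densk(s',s) - \dens(s)\densk(s,s')$. On the set where both products are strictly positive, exactly one of $\acc_{s's}, \acc_{ss'}$ equals $1$ while the other equals the reciprocal ratio; multiplying through, both sides collapse to the common value $\min\{\dens(s')\densk(s',s),\dens(s)\densk(s,s')\}$. The only subtle point -- and really the main obstacle of the proof -- is the bookkeeping on the exceptional set where one of the two products vanishes. There I would combine the convention that $\acc_{\cdot\cdot}$ is set to zero when (\ref{eq:acc}) is undefined with the observation that $\acc_{ss'}=0$ automatically whenever $\dens(s')\densk(s',s)=0$ while $\dens(s)\densk(s,s')>0$, since the $\min$ in (\ref{eq:acc}) is then zero. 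Both sides of the pointwise identity therefore vanish simultaneously off a $\lambda\otimes\lambda$-null set, which concludes the argument.
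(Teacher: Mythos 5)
Your proof is correct and follows essentially the same route as the paper's: both rest on the pointwise symmetry $\acc_{s's}\densk(s',s)\dens(s')=\acc_{ss'}\densk(s,s')\dens(s)$ together with Fubini's theorem. The only (cosmetic) difference is that you dispose of the reject term by observing its contribution depends on $A,B$ only through $A\cap B$, whereas the paper first reduces to the case $A\cap B=\emptyset$ via a decomposition of $A\times B$; your explicit case analysis of the degenerate set where a density vanishes is a welcome addition that the paper leaves implicit.
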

\begin{proof}
	There is nothing to prove for $A,B\in\mcal{A}$ with $A=B$. 
	Since $A\times B$ is the disjoint union of $(A\cap B)\times (A\cap B)$, $(A\backslash B)\times (B\backslash A)$, $(A\backslash B)\times (A\cap B)$ and $(A\cap B)\times (B\backslash A)$ we can w.l.o.g. assume that $A\cap B=\emptyset$.
	We get
	\begin{align*}
	&\pi\otimes \mu(A\times B)=\int_A \mu(s',B)\pi(ds')=\int_A \int_B \acc_{s' s}\densk(s',s)\dens(s')\lambda(ds)\lambda(ds')\\
	&\overset{*}{=}\int_B \int_A  \acc_{s s'}\densk(s,s')\dens(s)\lambda(ds')\lambda(ds)=\int_B \mu(s,A)\pi(ds)=\pi\otimes \mu(B\times A)
	\end{align*}
	whereby we have used that $\acc_{ s's}\densk(s',s)\dens(s')=\acc_{s s'}\densk(s,s')\dens(s)$ and Fubini's theorem in *.
\end{proof}

By looking at $\acc_{s' s}$, we see that normalization constants w.r.t. $\dens$ cancel out.
Thus, the algorithm may deal with conditional versions of $\dens$ by deploying $\dens$ unchanged.
This greatly facilitates data processing by means of conditioning.

\subsection{Mixture state spaces and translations}
\label{chap:mixspaces}
Now, we elaborate sampling across alternating measurable spaces with the help of functions that translate between these spaces.
We consider two naturally arising approaches, the so-called ad-hoc and post-hoc translations.
Both exhibit their very own challenges and advantages in terms of feasibility and adaptability.

Let $\spaces$ be a finite or countable set and let $(\states_i,\mcal{A}_i, \lambda_i)_{i\in\spaces}$ be a family of disjoint sigma finite measure spaces.
The \emph{mixture measure space} $(\states,\mcal{A}, \lambda)$ of $(\states_i,\mcal{A}_i, \lambda_i)_{i\in\spaces}$ is defined through 
\begin{align*}
&\states=\bigcup_{i\in\spaces}\states_i,\quad \mcal{A}=\sigma\Bigg(\bigcup_{i\in\spaces}\mcal{A}_i\Bigg),\quad \lambda=\sum_{i\in\spaces}\lambda_i\circ \text{id}_i^{-1}
\end{align*}
whereby $\mcal{A}$ is the smallest sigma algebra that comprises all $\mcal{A}_i$'s.
The sole purpose of the identities $\text{id}_i:\states_i\rightarrow \states$ with $\text{id}_i(s)=s$ is to lift states from the component spaces into the mixture space.

As before, we assume that $\pi$ exhibits a density $\dens$ w.r.t. $\lambda$.
A Metropolis-Hastings algorithm for $\pi$ can now be build as usual.
A natural way to set up proposals on the mixture space is by pursuing a modular design through a mixture of Markov kernels, whereby each component solely transitions within a pair of the spaces.
We  want to use the term \emph{move} to feature the available transitions.

Let $\moves$ be a finite or countable set, i.e. the set of \emph{moves}.
In order to build a mixture proposal, we define measurable functions $\moveprob_\ell:\states\rightarrow[0,1]$ with $\sum_{\ell\in\moves}\moveprob_{\ell}(s')=1$ for all $s'\in\states$.
$\moveprob_{\ell}(s')$ is the probability of choosing move $\ell$ while being in $s'$.
Furthermore, each move $\ell$ exhibits a kernel density $\densk_\ell$ that transitions between a pair of the spaces and complies with the measure of the target space.

For the sake of technical correctness, we need to lift each $\densk_\ell$ into the mixture space by setting it to 0 for unsupported arguments.
Therewith, the mixture proposal $\dist$ that operates on the mixture space $(\states,\mcal{A})$ reads
\begin{align*}
\dist(s',ds)=\sum_{\ell\in\moves}\moveprob_\ell(s')\dist_\ell(s',ds)
\end{align*}
with $\dist_\ell(s',ds)=\densk_\ell(s',s)\lambda(ds)$.

This time, the transition kernel for the Metropolis-Hastings algorithm for mixtures reads
\begin{align}
\label{eq:mhtranskernmix}
\mu(s',\LargerCdot)=\sum_{\ell\in\moves}\moveprob_\ell(s')\int \acc_{s' s}^\ell\delta_{s}(\LargerCdot)+\left(1-\acc_{s's}^\ell\right)\delta_{s'}(\LargerCdot)\dist_\ell(s', ds)
\end{align}
Given the previous link $s'$, this kernel involves choosing a move $\ell$ according to $\left(\moveprob_k(s')\right)_{k\in\moves}$ at first, then proposing an $s$ according to $\dist_\ell(s',\LargerCdot)$ and finally accepting or rejecting $s$ through $\acc_{s's}^\ell$.

This is in accordance with Algorithm \ref{alg:mh}, however, with the difference that we do not consider the mixture proposal as a whole.
Instead, we incorporate the moves explicitly into the acceptance probability (see also Section \ref{chap:mixtures}).

\begin{lemma}[Siems]
	\label{lem:mix}
	Let $r_\ell:\moves\rightarrow\moves$ be a bijection.
	The transition kernel $\mu$ of (\ref{eq:mhtranskernmix}) with acceptance probability 
	\begin{align*}
	\acc_{s' s}^\ell=
	\min\Bigg\{1,\frac{\dens(s)\densk_{\ellr}(s,s')}{\dens(s')\densk_{\ell}(s',s)}\cdot\frac{\moveprob_{\ellr}(s)}{\moveprob_{\ell}(s')}\Bigg\}
	\end{align*}
	preserves the detailed balance w.r.t. $\pi$.
\end{lemma}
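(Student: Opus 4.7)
The plan is to follow the same outline as the proof of Lemma~1, but carry out the calculation at the level of individual moves and combine contributions via the bijection $r$. As before, the decomposition of $A \times B$ into disjoint rectangles lets us reduce to the case $A \cap B = \emptyset$, since the ``diagonal'' piece $(A \cap B) \times (A \cap B)$ satisfies detailed balance trivially. Under this assumption, the $\delta_{s'}$ term in (\ref{eq:mhtranskernmix}) drops out of the integration over $B$, so
\begin{align*}
\pi \otimes \mu(A \times B) = \sum_{\ell \in \moves} \int_A \int_B \acc_{s's}^\ell \, \moveprob_\ell(s') \, \densk_\ell(s',s) \, \dens(s') \, \lambda(ds) \, \lambda(ds').
\end{align*}

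The crux is then a move-level balance identity: for every $\ell \in \moves$,
\begin{align*}
\acc_{s's}^\ell \, \moveprob_\ell(s') \, \densk_\ell(s',s) \, \dens(s') = \acc_{ss'}^{\ellr} \, \moveprob_{\ellr}(s) \, \densk_{\ellr}(s,s') \, \dens(s).
\end{align*}
Writing $a = \dens(s') \moveprob_\ell(s') \densk_\ell(s',s)$ and $b = \dens(s) \moveprob_{\ellr}(s) \densk_{\ellr}(s,s')$, the left-hand side equals $\min(a,b)$ directly from the definition of $\acc_{s's}^\ell$. The right-hand side likewise reduces to $\min(a,b)$ by the same $\min\{1,\cdot\}$ manipulation, provided the bijection $r$ pairs each move with its reverse in the sense that $r_{r_\ell} = \ell$---this is the natural involution assumption underlying the statement and ensures that the reverse of move $\ellr$, as encoded in $\acc_{ss'}^{\ellr}$, is indeed move $\ell$.

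With this identity in hand, I would apply Fubini to each summand to swap the order of integration, then re-index the outer sum over moves via the bijection $\ell \mapsto \ellr$. The result is precisely the expansion of $\pi \otimes \mu(B \times A)$, delivering detailed balance. The main obstacle is verifying the move-level identity cleanly: one has to confirm the involution property of $r$, and handle the edge cases in which a kernel density $\densk_\ell$, lifted to the mixture space, vanishes because the endpoints lie outside its natural component spaces. The convention that $\acc^\ell$ is set to zero wherever it is not defined makes both sides of the identity vanish simultaneously in those cases, so the re-indexing goes through without further bookkeeping.
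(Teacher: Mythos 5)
Your proof is correct and follows essentially the same route as the paper's: reduce to $A\cap B=\emptyset$, establish the pointwise move-level identity $\acc_{s's}^\ell\,\moveprob_\ell(s')\densk_\ell(s',s)\dens(s')=\acc_{ss'}^{\ellr}\,\moveprob_{\ellr}(s)\densk_{\ellr}(s,s')\dens(s)$ via the $\min$ manipulation, then apply Fubini and re-index the sum over $\moves$ through the bijection $\ell\mapsto\ellr$. You are also right to flag that the move-level identity genuinely needs $r_{r_\ell}=\ell$ (an involution, not merely a bijection, as the lemma states); the paper's proof uses this silently in its first equality, so your explicit remark is a small improvement rather than a deviation.
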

\begin{proof}
	For $A\in\mcal{A}_j$, $B\in\mcal{A}_i$, we get
	\begin{align*}
	&\sum_{\ell\in\moves}\int_A\int_B\acc_{s' s}^\ell\dens(s')\moveprob_{\ell}(s')\densk_{\ell}(s',s)\lambda_i(ds)\lambda_j(ds')\\
	&=\sum_{\ell\in\moves}\int_B\int_A\acc_{s s'}^\ellr\dens(s)\moveprob_{\ellr}(s)\densk_{\ellr}(s,s')\lambda_j(ds')\lambda_i(ds)\\
	&\overset{*}{=}\sum_{\ell\in\moves}\int_B\int_A\acc_{s s'}^\ell\dens(s)\moveprob_{\ell}(s)\densk_{\ell}(s,s')\lambda_j(ds')\lambda_i(ds)
	\end{align*}
	whereby in * we have used the uniqueness of $r_\ell$. 
\end{proof}

$\ellr$ stands for the unique backward or reverse move of $\ell$.
Consequently, if move $\ell$ transitions from $\states_j$ to $\states_i$, move $\ellr$ should transition in the opposite direction from $\states_i$ to $\states_j$.	
Please note that the uniqueness of $\ellr$ doesn't imply that the transitions performed by move $\ell$ can only be reversed by move $\ellr$.
In fact, the pairs of spaces are not supposed to be unique to the moves and the possible transitions determined by related move pairs may overlap arbitrarily.

Concerning the design of the moves, it is often not clear how to transition away from a point in one space to a point in another space.
Especially random walk proposals pose a problem if there is no suitable measure of distance between the spaces available.
This gives rise to two distinct approaches that employ functions as a device for translation.

\begin{figure}[ht]
	\center
	\begin{tikzpicture}[scale=0.80]
	\tikzstyle{every node}=[draw,shape=ellipse, line width=1.1pt];
	\node (node1) at (150:3) {$\states_j$};
	\node (node2) at ( 30:3) {$\states_i$};
	\node (node3) at ( 270:0)[rectangle] {$\densk_{\ell}$};
	\node (node4) at ( 270:1.3)[] {$\states_i$};
	\node (node5) at ( 270:-3)[rectangle] {$\bo s_{\ell}$};
	\node[font=\large, draw=none,fill=none] at (270:-4) {(a)};

	\path[->] (node1.north) edge [out=50, in=180, line width=1.1pt] (node5.west);
	\path[->] (node5.east) edge [out=0, in=120, line width=1.1pt] (node2.north);
	\path[->] (node2.south) edge [out=250, in=0, line width=1.1pt] (node3.east);
	\path[->] (node3) edge [, line width=1.1pt] (node4.north);
	\end{tikzpicture}
	$\quad\quad\quad\quad$
	\begin{tikzpicture}[scale=0.80]
	\tikzstyle{every node}=[draw,shape=ellipse, line width=1.1pt];
	\node (node1) at (150:3) {$\states_j$};
	\node (node2) at ( 30:3) {$\ustates_{\ell}$};
	\node (node3) at ( 270:0)[rectangle] {$\bo s_{\ell}$};
	\node (node4) at ( 270:1.3) {$\states_i$};
	\node (node5) at ( 270:-3)[rectangle] {$\densk_{\ell}$};
	\node[font=\large, draw=none,fill=none] at (270:-4) {(b)};

	\path[->] (node1) edge [out=-20, in=120, line width=1.1pt] (node3);
	\path[->] (node1.north) edge [out=50, in=180, line width=1.1pt] (node5.west);
	\path[->] (node5.east) edge [out=0, in=120, line width=1.1pt] (node2.north);
	\path[->] (node2) edge [out=200, in=60, line width=1.1pt] (node3);
	\path[->] (node3) edge [, line width=1.1pt] (node4.north);
	\end{tikzpicture}
	\caption{Transition graphs for a move $\ell\in\moves$ that transitions from $\states_j$ to $\states_i$ in the fashion of (a) an ad-hoc translation and (b) a post-hoc translation}
	\label{fig:mhrjcomp}
\end{figure}
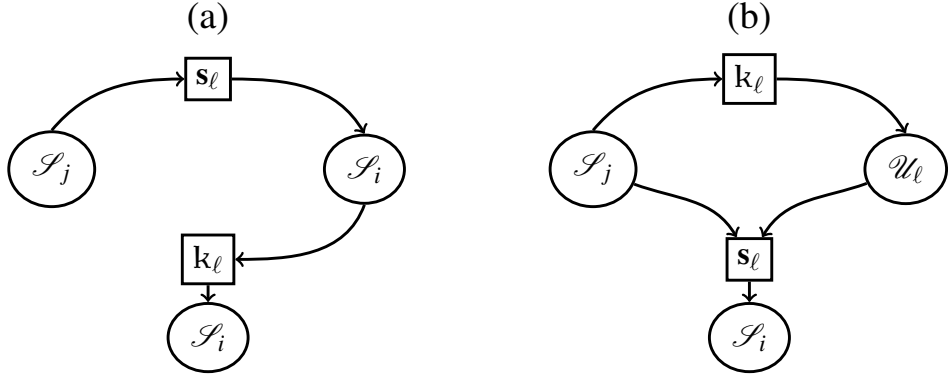

In order to transition from $\states_j$ to $\states_i$ in move $\ell$, our first approach employs a measurable function $\bo s_{\ell}:\states_j\rightarrow\states_i$ that translates between the spaces and a kernel density $\densk_\ell$ from $(\states_i,\mcal{A}_i)$ to $(\states_i,\mcal{A}_i, \lambda_i)$.
Functions like $\bo s_{\ell}$ are referred to as \emph{translation functions}.
Given the previous link in the chain $s'\in\states_j$, new states are proposed by virtue of $\dist_\ell(s',\LargerCdot)=\int_\LargerCdot\densk_{\ell}(\bo s_{\ell}(s'), s)\lambda_i(ds)$.
We call this procedure \emph{ad-hoc translation}.
Figure \ref{fig:mhrjcomp}(a) illustrates it graphically.

\begin{corollary}[Siems]
\label{cor:ad-hoc}
The transition kernel $\mu$ of (\ref{eq:mhtranskernmix}) with acceptance probability 
\begin{align*}
\acc_{s' s}^\ell=
\min\Bigg\{1,\frac{\dens(s)\densk_{\ellr}(\bo s_{\ellr}(s),s')}{\dens(s')\densk_{\ell}(\bo s_{\ell}(s'),s)}\cdot\frac{\moveprob_{\ellr}(s)}{\moveprob_{\ell}(s')}\Bigg\}
\end{align*}
preserves the detailed balance w.r.t. $\pi$ in ad-hoc translations.
\end{corollary}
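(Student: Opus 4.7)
The plan is to observe that Corollary 1 is essentially a direct specialization of Lemma 2, once one identifies the right effective kernel density. In the ad-hoc translation scheme, the proposal for move $\ell$ has the form
\begin{align*}
\dist_\ell(s', ds) = \densk_\ell(\bo s_\ell(s'), s)\, \lambda_i(ds),
\end{align*}
so the substitution $\tilde{\densk}_\ell(s', s) \defeq \densk_\ell(\bo s_\ell(s'), s)$ gives a Markov kernel density from $(\states_j, \mcal{A}_j)$ to $(\states_i, \mcal{A}_i, \lambda_i)$ in the sense required by Lemma 2. That is the whole idea.

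First, I would verify that $\tilde{\densk}_\ell$ really is a valid kernel density against $\lambda_i$: measurability follows because $\bo s_\ell$ is measurable by assumption and $\densk_\ell$ is measurable jointly in its arguments, hence so is the composition $s'\mapsto \densk_\ell(\bo s_\ell(s'), s)$; and the normalization $\int \tilde{\densk}_\ell(s', s)\lambda_i(ds) \leq 1$ is inherited from $\densk_\ell(\bo s_\ell(s'), \cdot)$. Similarly, $\tilde{\densk}_{\ellr}(s,s')\defeq \densk_{\ellr}(\bo s_{\ellr}(s), s')$ serves as the reverse kernel density.

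Next, I would plug $\tilde{\densk}_\ell$ and $\tilde{\densk}_{\ellr}$ into the acceptance probability from Lemma 2, which yields exactly the expression
\begin{align*}
\acc_{s' s}^\ell = \min\Bigg\{1, \frac{\dens(s)\,\tilde{\densk}_{\ellr}(s, s')}{\dens(s')\,\tilde{\densk}_{\ell}(s', s)}\cdot \frac{\moveprob_{\ellr}(s)}{\moveprob_{\ell}(s')}\Bigg\} = \min\Bigg\{1, \frac{\dens(s)\,\densk_{\ellr}(\bo s_{\ellr}(s), s')}{\dens(s')\,\densk_{\ell}(\bo s_{\ell}(s'), s)}\cdot \frac{\moveprob_{\ellr}(s)}{\moveprob_{\ell}(s')}\Bigg\},
\end{align*}
as claimed. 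Detailed balance for the resulting $\mu$ then follows immediately from Lemma 2, with no additional Fubini or change-of-variables argument required.

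I do not foresee a hard step: the only mild subtlety is the cosmetic one of making sure the lifting of $\tilde{\densk}_\ell$ from component spaces to the mixture space (by zero extension, as done for $\densk_\ell$ in Section \ref{chap:mixspaces}) is carried out consistently, so that the integrals in Lemma 2's proof remain supported exactly on the pairs $(\states_j, \states_i)$ associated with move $\ell$ and its reverse $\ellr$. Once this bookkeeping is acknowledged, the corollary is a one-line consequence of Lemma 2.
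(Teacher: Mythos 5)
Your proof is correct and matches the paper's intent exactly: the paper states this as a corollary of Lemma \ref{lem:mix} with no separate proof, precisely because the ad-hoc scheme is Lemma \ref{lem:mix} applied to the substituted kernel density $\tilde{\densk}_\ell(s',s) = \densk_\ell(\bo s_\ell(s'), s)$. The only nitpick is that the normalization should read $\int \tilde{\densk}_\ell(s',s)\lambda_i(ds) = 1$ (a Markov kernel, not merely sub-Markov), which again is inherited directly from $\densk_\ell(\bo s_\ell(s'),\cdot)$.
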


Ad-hoc translations pose a flexible framework that is directly accessible through non-mathematicians with a basic understanding of the Metropolis-Hastings algorithm.
In practice, however, another technically more demanding Metropolis-Hastings method has become the quasi gold-standard for sampling within mixed spaces.
It is commonly referred to as the reversible jump algorithm \citep{green1995}.
We will allocate this approach to another type of translation and give a novel result that slightly generalizes this technique.

In contrast to ad-hoc translations, now, we propose first and then apply a translation to the proposed state.
This allows us to detach the target space from the space where the proposal kernel density acts on.
Thus, in move $\ell\in\moves$ we may transition between a pair of the spaces, say from $\states_j$ to $\states_i$, by detouring over another user-defined sigma finite measure space $(\ustates_{\ell},\mcal{B}_{\ell},\nu_{\ell})$.
To this end, we design a kernel density $\densk_{\ell}$ from $(\states_j,\mcal{A}_j)$ to $(\ustates_{\ell},\mcal{B}_{\ell},\nu_{\ell})$ and a measurable function $\bo s_{\ell}:\states_j\times \ustates_{\ell}\rightarrow \states_i$.
Given the previous link $s'\in\states_j$, a random state $u'\in \ustates_{\ell}$ is drawn by virtue of $\int_\LargerCdot\densk_{\ell}(s', u')\nu_{\ell}(du')$ and the newly proposed state $s\in\states_i$ is obtained as $s=\bo s_{\ell}(s',u')$.
We call this procedure \emph{post-hoc translation}.
Figure \ref{fig:mhrjcomp}(b) illustrates it graphically.

The case where $(\states_j,\mcal{A}_j)$ and $(\states_i,\mcal{A}_i)$ are discrete can be treated straightforwardly since the acceptance probability of move $\ell$ then reads
\begin{align}
\label{eq:accdiscpost}
\acc_{s' s}^\ell=
\min\Bigg\{1,\frac{\dens(s)\int\delta_{\bo s_\ellr(s,u)}\left(\{s'\}\right)\densk_{\ellr}(s,u)\nu_{\ellr}(du)}
{\dens(s')\int\delta_{\bo s_\ell(s',u')}\left(\{s\}\right)\densk_{\ell}(s',u')\nu_{\ell}(du')}\cdot\frac{\moveprob_{\ellr}(s)}{\moveprob_{\ell}(s')}\Bigg\}
\end{align}
This applies, because the proposals in forward and backward direction are summarized as discrete and are thus compatible to $\dens$.
Though, the required feasibility of the integrals in (\ref{eq:accdiscpost}) slightly restricts the set of possible post-hoc translations here.

However, most interesting cases arise when the proposal $\int \delta_{\bo s_\ell(s',u')}(\LargerCdot)\densk_\ell(s',u')\nu_{\ell}(du')$ either doesn't exhibit a kernel density w.r.t. $\lambda_i$ or when it is computationally infeasible.
Unfortunately, our methods so far do rely on such densities which calls for a different approach here.

In practice, it would be desirable to derive the acceptance probabilities directly from the given proposal kernel densities.
Thus, the trick is to accept and reject $s=\bo s_{\ell}(s',u')$ within $\states_j\times\ustates_\ell$, which yields the actual post-hoc paradigm.
In order for this procedure to be well-defined, any transition $(s',u')\in \states_j\times\ustates_\ell$ must exhibit a unique backward transition $(s,u)\in \states_i\times\ustates_\ellr$.
Thus, we need to define additional measurable functions $\bo u_\ell:\states_j\times\ustates_\ell\rightarrow \ustates_\ellr$ with
\begin{align*}
(\bo s, \bo u)_\ell(s',u')=(s,u)\quad \Leftrightarrow\quad (\bo s, \bo u)_\ellr(s,u)=(s',u')
\end{align*}
for all $(s',u')\in\states_j\times\ustates_\ell$ and $(s,u)\in\states_i\times\ustates_{r_\ell}$.


This reasoning shows that the possible options for choosing post-hoc translations this way are somewhat limited.
Therewith, the seemingly restrictive conditions of the next lemma can be considered as fairly universal.

\begin{lemma}[Siems]
	\label{lem:post-hoc}
	Assume that for move $\ell\in\moves$ with transitions from $\states_j$ to $\states_i$ and corresponding unique backward move $r_\ell$, there exist measurable functions $\bo u_{\ell}:\states_j\times \ustates_{\ell}\rightarrow \ustates_{\ellr}$ and $f_{\ell}:\states_j\times \ustates_{\ell}\rightarrow\mathbb{R}_{\geq 0}$ with
	\begin{align*}
	&\textbf{\emph{(I)}}\ \ (\bo s, \bo u)_\ell=(\bo s, \bo u)_\ellr^{-1}\\
	&\textbf{\emph{(II)}}\ \ \lambda_i\otimes\nu_{\ellr}\left((\bo s, \bo u)_{\ell}(ds',du')\right)=f_{\ell}(s',u')\lambda_j\otimes\nu_{\ell}(ds', du')
	\end{align*}
	The transition kernel
	\begin{align*}
	\mu(s',\LargerCdot)=\sum_{\ell\in\moves}\moveprob_\ell(s')\int \acc_{s'u' }^\ell\delta_{\bo s_\ell}(\LargerCdot)+\left(1-\acc_{s'u'}^\ell\right)\delta_{s'}(\LargerCdot)\densk_\ell(s',u')\nu_{\ell}(du')
	\end{align*}
    with
	\begin{align*}
	\acc_{s'u'}^\ell=\min\left\{1,\ \frac{\dens(\bo s_{\ell})\densk_{\ellr}((\bo s, \bo u)_{\ell})}{\dens(s')\densk_{\ell}(s',u')}\cdot\frac{\moveprob_{\ellr}(\bo s_{\ell})}{\moveprob_{\ell}(s')}\cdot f_{\ell}(s',u')\right\}
	\end{align*}
	preserves the detailed balance w.r.t. $\pi$ in post-hoc translations.
\end{lemma}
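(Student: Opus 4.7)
The plan is to verify detailed balance move by move, matching each forward move $\ell$ against its reverse $\ellr$, in the spirit of the proof of Lemma~\ref{lem:mix}. I would first reduce, exactly as there, to disjoint $A \in \mcal{A}_j$ and $B \in \mcal{A}_i$, so that the Dirac self-transition terms in $\mu$ vanish. The contribution of a single forward move $\ell$ (transitioning $\states_j \to \states_i$) to $\pi \otimes \mu(A \times B)$ is then
\begin{align*}
I_\ell = \int_A \int_{\{u' : \bo s_\ell(s',u') \in B\}} \acc_{s'u'}^\ell \, \dens(s') \moveprob_\ell(s') \densk_\ell(s',u') \, d\nu_\ell(u') \, d\lambda_j(s'),
\end{align*}
and the goal is to show $I_\ell$ equals the move-$\ellr$ contribution to $\pi \otimes \mu(B \times A)$; summing over $\ell$ and relabelling via the bijection $r$ closes the proof.

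The central step is a change of variables via $(s,u) = (\bo s, \bo u)_\ell(s',u')$. Condition (I) makes this a bimeasurable bijection with inverse $(\bo s, \bo u)_\ellr$, while condition (II) supplies the Jacobian-type identity
\begin{align*}
\int g\big((\bo s, \bo u)_\ell(s',u')\big) f_\ell(s',u') \, d(\lambda_j \otimes \nu_\ell)(s',u') = \int g(s,u) \, d(\lambda_i \otimes \nu_\ellr)(s,u)
\end{align*}
for nonnegative measurable $g$. Applied to $I_\ell$, the integration region transforms into $B \times \{u : \bo s_\ellr(s,u) \in A\}$, and the integrand becomes $\acc_{s'u'}^\ell \dens(s') \moveprob_\ell(s') \densk_\ell(s',u') / f_\ell(s',u')$ with $(s',u') = (\bo s, \bo u)_\ellr(s,u)$ substituted throughout.

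What remains is algebraic: unfolding the $\min$ in $\acc_{s'u'}^\ell$ and in $\acc_{s u}^\ellr$, both the transformed and the target integrand should reduce to $\min\{X,Y\}$ with $X = \dens(s') \moveprob_\ell(s') \densk_\ell(s',u')/f_\ell(s',u')$ and $Y = \dens(s) \moveprob_\ellr(s) \densk_\ellr(s,u)$, provided that
\begin{align*}
f_\ellr(s,u) \cdot f_\ell\big((\bo s, \bo u)_\ellr(s,u)\big) = 1.
\end{align*}
This consistency identity is the main technical obstacle. I would derive it by applying condition (II) symmetrically to $\ell$ and $\ellr$, using (I) to identify $(\bo s, \bo u)_\ell^{-1}$ with $(\bo s, \bo u)_\ellr$, and comparing the two resulting expressions for $\lambda_i \otimes \nu_\ellr$ on an arbitrary measurable set. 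Once this is in hand, the move-by-move match is immediate and Fubini-style rearrangement together with the bijectivity of $r$ on $\moves$ completes the proof.
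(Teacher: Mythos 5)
Your proposal is correct and follows essentially the same route as the paper's own proof: the change of variables supplied by conditions (I) and (II), the consistency identity $f_{\ell}(s',u')\,f_{\ellr}\big((\bo s,\bo u)_{\ell}(s',u')\big)=1$ (which the paper likewise derives by applying (II) to $\ell$ and $\ellr$ in succession and invoking (I)), and the final $\min$-matching and relabelling over the bijection $r$. The only difference is cosmetic — you push the forward contribution onto $(s,u)$-coordinates whereas the paper pulls $\pi\otimes\dist_{\ellr}(B\times A)$ back onto $(s',u')$-coordinates — so there is nothing substantive to add.
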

\begin{proof}
	\textbf{(I)} implies that each $(\bo s, \bo u)_{\ell}:\states_j\times \ustates_{\ell}\rightarrow\states_i\times \ustates_{\ellr}$ is a bijection with inverse $(\bo s, \bo u)_{\ellr}$ and \textbf{(II)} guides through an integral transformation by means of the densities $f_{\ell}$.
	Since 
	\begin{align*}
	&\lambda_j\otimes\nu_{\ell}(ds', du')=\lambda_j\otimes\nu_{\ell}(\bo s_{\ellr}(\bo s_{\ell}(ds', du')))\\
	&=f_{\ell}(\bo s_{\ell}(ds', du'))\lambda_i\otimes\nu_{\ellr}(\bo s_{\ell}(ds', du'))=f_{\ell}(\bo s_{\ell}(s', u'))f_{\ellr}(s',u')\lambda_j\otimes\nu_{\ell}(ds', du')
	\end{align*}
	we see that $f_{\ell}(s',u')f_{\ellr}(s,u)=1$ if $\bo s_{\ell}(s', u')=(s,u)$ and vice versa.
	Furthermore, for $A\in\mcal{A}_j$ and $B\in\mcal{B}_{i}$, we get
	\begin{align*}
	\pi\otimes\dist_\ellr(B\times A)
	&=\int\delta_{s}(B)\delta_{\bo s_{\ellr}}(A) \dens(s)\densk_{\ellr}(s,u)\lambda_i\otimes\nu_{\ellr}(ds,du)\\
	&=\int\delta_{\bo s_{\ell}}(B)\delta_{s'}(A) \dens(\bo s_{\ell})\densk_{\ellr}((\bo s, \bo u)_{\ell})\lambda_i\otimes\nu_{\ellr}((\bo s, \bo u)_{\ell}(ds',du'))\\
	&=\int \delta_{s'}(A)\delta_{\bo s_{\ell}}(B)\dens(\bo s_{\ell})\densk_{\ellr}((\bo s, \bo u)_{\ell})f_{\ell}(s',u')\lambda_j\otimes \nu_\ell(ds', du')
	\end{align*}
	The rest of the proof now follows the same scheme as before.	
\end{proof}

In mathematically precise terms, it is incorrect to refer to $\densk_\ell$ as a proposal kernel density because it is generally not a kernel density of the proposal.
However, in order to maintain a consistent language, we still want to keep this naming.
Thus, the proof of Lemma \ref{lem:post-hoc} applies an integral transformation that secures the compatibility of the (joint) densities w.r.t. the transitions in forward and backward direction.

If all spaces are discrete coordinate spaces, we may choose arbitrary bijections $(\bo s, \bo u)_{\ell}$ that meet \textbf{(I)} and apply
\begin{align*}
\acc_{s'u'}^\ell=\min\left\{1,\ \frac{\dens(\bo s_{\ell})\densk_{\ellr}((\bo s, \bo u)_{\ell})}{\dens(s')\densk_{\ell}(s',u')}\cdot\frac{\moveprob_{\ellr}(\bo s_{\ell})}{\moveprob_{\ell}(s')}
\cdot \frac{\lambda_i\otimes\nu_{\ellr}((\bo s, \bo u)_{\ell}(ds', du'))}{\lambda_j\otimes\nu_{\ell}(ds',du')}\right\}
\end{align*}
\citep{fronk2004} deploy post-hoc translation in this way.

It is common to require that $\dimens{\states_j}+\dimens{\ustates_{\ell}}=\dimens{\states_i}+\dimens{\ustates_{\ellr}}$ if move $\ell$ transitions from $\states_j$ to $\states_i$.
\cite{green1995} pioneered the post-hoc translation approach and introduced this condition as \emph{dimension matching}.
His intention was to embed the spaces into a single superordinate one in order to avoid the inhomogeneity of the dimension.
Around that time the nowadays ubiquitous term \emph{trans dimensional sampling} was coined.

The \emph{reversible jump algorithm} of \cite{green1995} employs post-hoc translations over purely continuous coordinate spaces.
It requires the dimension matching condition and that each $(\bo s, \bo u)_{\ell}$ is a diffeomorphism with Jacobi determinant $\bo J_{\ell}$.
If \textbf{(I)} is met, the resulting acceptance probability reads
\begin{align*}
\acc_{s'u'}^\ell=\min\left\{1,\ \frac{\dens(\bo s_{\ell})\densk_{\ellr}((\bo s, \bo u)_{\ell})}{\dens(s')\densk_{\ell}(s',u')}\cdot\frac{\moveprob_{\ellr}(\bo s_{\ell})}{\moveprob_{\ell}(s')}
\cdot |\bo J_{\ell}|\right\}
\end{align*}
This follows from an integration by substitution for multiple variables since $ds'du'|\bo J_{\ell}|=dsdu$ if we substitute $(s,u)=(\bo s, \bo u)_{\ell}(s', u')$.

Post-hoc translations are highly advanced, require very careful implementations and as it stands do only support certain kinds of state spaces.
In return, they give the opportunity to generate random states within spaces of lower dimension than the target space.
Therewith, they pave the way for tightly adapted, parsimonious sampling schemes where even deterministic transitions are viable.

Moreover, post-hoc translations may act on a space of higher dimension than the target space, which grands access to intractable compound distributions.
To see this, consider the following minimal example to ordinarily transition from $\mathbb{R}$ to $\mathbb{R}$ by proposing values in $\mathbb{R}^2$ through the kernel density $\densk$.
By employing the diffeomorphism $(s', u_1, u_2)\mapsto \left(u_1+u_2, s'-u_2, u_2\right)$ we end up with equal forward and backward moves and
\begin{align*}
\acc_{s', u_1, u_2}=\min\left\{1,\ \frac{\densk(u_1+u_2, s'-u_2, u_2)\dens(u_1+u_2)}{\densk(s',u_1, u_2)\dens(s')}\right\}
\end{align*}

The corresponding proposal on the target space, i.e. $\dist(\LargerCdot)=\int \delta_{u_1+u_2}(\LargerCdot)\densk(s, u_1, u_2)du_1du_2$, is the convolution of $\densk(s, u_1, u_2)$ w.r.t $u_1$ and $u_2$.
If $\dist$ exhibits an intractable density on $\mathbb{R}$, the post-hoc paradigm proves functional here.
Nevertheless, there is a little price to pay when we use the post-hoc paradigm this way:
We lose the maximality of the acceptance probability.
This topic will be discussed in Section \ref{chap:genmh}.

\subsection{Metropolis within Gibbs}
\label{chap:metgibs}

Imagine a coordinate state space, whereby each step in the Metropolis-Hastings algorithm solely modifies one single coordinate.
This idea leads to the Metropolis within Gibbs framework.
We generalize this approach in order to allow the identification of new measure spaces to run proposals on.
Therewith, we create a new perspective that even provides access to an important class of post-hoc translations.

Given a countable set of moves $\moves$, consider a family of measurable functions $(\funcg_\ell)_{\ell\in \moves}$ from $(\states, \mcal{A})$ into a measurable space $(\mcal{G}_\ell,\mcal{B}_\ell)$ with $\{g\}\in\mcal{B}_\ell$ for all $g\in\mcal{G}_\ell$.
Each $\funcg_\ell$ determines a partition of the state space $\states$ into measurable sets that yield the same values under $\funcg_\ell$. 
Given the previous link $s'\in\states$, the idea is to choose an $\ell$ out of $\moves$ and to propose a new state exclusively within the ensuing set 
\begin{align*}
\rel_\ell(s')=\{s\in\states\mid \funcg_\ell(s)=\funcg_\ell(s')\}
\end{align*}

This entails the use of conditional versions of $\pi$.
Let $\rand X$ be a random variable with $\rand X\sim \pi$ and $\prob{\rand X\in ds\mid \funcg_\ell(\rand X)=g}=\dens_\ell(g, s)\lambda_\ell(ds)$ with a measure $\lambda_\ell$ over $\mcal{A}$ and a kernel density $\dens_\ell$ from $(\mcal{G}_\ell,\mcal{B}_\ell)$ to $(\states,\mcal{A}, \lambda_\ell)$.
For all $s'\in\states$, the restriction of $\lambda_\ell$ to $\rel_\ell(s')$ is supposed to be sigma finite.

On these grounds, we employ proposal kernel densities $\densk_\ell$ from $(\states,\mcal{A})$ to $(\states,\mcal{A}, \lambda_\ell)$ 
such that $\dist_\ell(s',ds)=\densk_\ell(s',s)\lambda_\ell(ds)$
and $\dist_\ell(s',\rel_\ell(s'))=1$ for all $\ell\in\moves$ and $s'\in \states$.
Furthermore, we define move probabilities $\moveprob_\ell:\states\rightarrow[0,1]$ as before.

\begin{lemma}[Siems]	
	\label{lem:metgibbs}
    The transition kernel $\mu$ of (\ref{eq:mhtranskernmix}) with acceptance probability 
	\begin{align}
	\label{eq:metgibbsacc}
	\acc_{s' s}^\ell=
	\min\left\{1,\frac{\dens_\ell(g, s)\densk_\ell(s, s')}{\dens_\ell(g, s')\densk_\ell(s',s)}\cdot\frac{\moveprob_\ell(s)}{\moveprob_\ell(s')}\right\}
	\end{align}
	preserves the detailed balance w.r.t. $\pi$.
\end{lemma}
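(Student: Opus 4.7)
The plan is to imitate the proofs of Lemma~1 and Lemma~2 (for mixtures), but inserting a disintegration of $\pi$ along the level sets of $\funcg_\ell$. The key structural observation is that for each move $\ell$ the proposal $\dist_\ell$ is supported on $\rel_\ell(s')$, so within each level set $\{\funcg_\ell=g\}$ we are effectively running an ordinary Metropolis--Hastings step targeting the conditional distribution $\dens_\ell(g,\LargerCdot)\lambda_\ell$. The proposed acceptance probability is precisely the usual Hastings ratio for that conditional target, so the pointwise detailed balance identity should follow by the classical $\min\{1,r\}$ computation, and the rest is bookkeeping.

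First, as in the proof of Lemma~1, I would reduce to the case $A\cap B=\emptyset$ by writing $A\times B$ as the disjoint union of $(A\cap B)\times(A\cap B)$, $(A\setminus B)\times(B\setminus A)$, $(A\setminus B)\times(A\cap B)$, and $(A\cap B)\times(B\setminus A)$. On the off-diagonal parts, the $\delta_{s'}$ stay-put contributions in (\ref{eq:mhtranskernmix}) drop out, so I only need to handle the accept term. This leaves me with
\begin{align*}
\pi\otimes\mu(A\times B)=\sum_{\ell\in\moves}\int_A\int_B \acc_{s's}^\ell\,\moveprob_\ell(s')\,\densk_\ell(s',s)\,\lambda_\ell(ds)\,\pi(ds').
\end{align*}

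Next, I would disintegrate $\pi$ through $\funcg_\ell$. Writing $\pi_\ell$ for the pushforward of $\pi$ under $\funcg_\ell$, the hypothesis $\prob{\rand X\in ds\mid \funcg_\ell(\rand X)=g}=\dens_\ell(g,s)\lambda_\ell(ds)$ gives $\pi(ds')=\int \dens_\ell(g,s')\lambda_\ell(ds')\,\pi_\ell(dg)$. Since $\dist_\ell(s',\rel_\ell(s'))=1$, the kernel density $\densk_\ell(s',s)$ vanishes off $\{\funcg_\ell(s)=\funcg_\ell(s')\}$, so in the inner double integral we may freely insert $\dens_\ell(g,s)$ with the same $g$ that appears in $\dens_\ell(g,s')$. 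The pointwise detailed-balance identity
\begin{align*}
\acc_{s's}^\ell\,\moveprob_\ell(s')\,\densk_\ell(s',s)\,\dens_\ell(g,s') = \acc_{ss'}^\ell\,\moveprob_\ell(s)\,\densk_\ell(s,s')\,\dens_\ell(g,s)
\end{align*}
then follows from the familiar fact that for any nonnegative $a,b$ one has $a\min\{1,b/a\}=b\min\{1,a/b\}$, applied to $a=\dens_\ell(g,s')\densk_\ell(s',s)\moveprob_\ell(s')$ and $b=\dens_\ell(g,s)\densk_\ell(s,s')\moveprob_\ell(s)$.

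Now I would apply Fubini on the product $\pi_\ell\otimes\lambda_\ell\otimes\lambda_\ell$ to swap the inner $\lambda_\ell(ds)$ and $\lambda_\ell(ds')$ integrals, and then re-assemble the disintegration in reverse to identify the result with $\pi\otimes\mu(B\times A)$. Fubini is legitimate because the restrictions of $\lambda_\ell$ to each $\rel_\ell(s')$ are sigma-finite by assumption and the integrand is nonnegative. The main obstacle, and the only place care is required beyond the scheme of Lemmas~1 and~2, is ensuring that this disintegration is well-posed, that is, that the joint measurability of $(g,s)\mapsto\dens_\ell(g,s)$ and the sigma-finiteness hypothesis on $\lambda_\ell\bigr|_{\rel_\ell(s')}$ suffice to legitimise both the interchange and the reconstruction step; once that is in hand, the computation reduces to the same symmetric rearrangement as in the previous lemmas.
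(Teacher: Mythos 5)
Your proposal is correct and follows essentially the same route as the paper's proof: disintegrate $\pi$ through $\funcg_\ell$ into $\int\int_{\LargerCdot}\dens_\ell(g,s')\lambda_\ell(ds')\,\pi\circ\funcg_\ell^{-1}(dg)$, use the support condition $\dist_\ell(s',\rel_\ell(s'))=1$ to insert the constraint $\funcg_\ell(s)=\funcg_\ell(s')=g$, apply the pointwise symmetric identity for $\min\{1,b/a\}$, and swap the inner integrals by Fubini using the partial sigma-finiteness of $\lambda_\ell$. The paper states the disintegration as a given rather than dwelling on its well-posedness, so your closing caveat is, if anything, slightly more careful than the original.
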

\begin{proof}
	Consider the following equality
	\begin{align*}
	\pi(\LargerCdot)=\int\int_{\LargerCdot}\dens_\ell(g, s')\lambda_\ell(ds')\pi\circ\funcg_\ell^{-1}(dg)
	\end{align*}
	For $A,B\in\mcal{A}$ with $A\cap B=\emptyset$ we get
	\begin{align*}
	&\pi\otimes\mu(A\times B)
	=\int\sum\limits_{\ell\in\moves}\int_A\int_B\acc_{s's}^\ell\moveprob_\ell(s')\densk_\ell(s',s) \lambda_\ell(ds)\dens_\ell(g, s')\lambda_\ell(ds')\pi\circ\funcg_\ell^{-1}(dg)\\
	&=\int\sum\limits_{\ell\in\moves}\int_A\int_B\ind{\funcg_\ell(s')=\funcg_\ell(s)=g}\acc_{s's}^\ell\moveprob_\ell(s')\densk_\ell(s',s) \lambda_\ell(ds)\dens_\ell(g, s')\lambda_\ell(ds')\pi\circ\funcg_\ell^{-1}(dg)\\
	&\overset{*}{=}\int\sum\limits_{\ell\in\moves}\int_B\int_A\acc_{ss'}^\ell\moveprob_\ell(s)\densk_\ell(s,s')\lambda_\ell(ds') \dens_\ell(g, s)\lambda_\ell(ds)\pi\circ\funcg_\ell^{-1}(dg)
	=\pi\otimes\mu(B\times A)
	\end{align*}
	whereby we have used Fubini's theorem together with the partial sigma finiteness of $\lambda_\ell$ in *. 
\end{proof}

The major application scenario for Metropolis within Gibbs is to simplify sampling within coordinate spaces.
Let $\states=\states_1\times...\times\states_n$ be an arbitrary coordinate space together with a product sigma algebra $\mcal{A}$ and for $\ell=1,...,n$ define $\funcg_\ell(s)=(s_1,...,s_{\ell-1},s_{\ell+1},...,s_n)$.
Move $\ell$ describes transitions where all but the $\ell$-th coordinate remain fixed.

Let further $\pi(ds)=\dens(s)\nu(ds)$ with a product measure $\nu=\nu_1\otimes...\otimes\nu_n$ over $\mcal{A}$. We define
\begin{align*}
&\dens_\ell(\funcg_\ell(s'), s)=\frac{\dens(s)}{\int\dens(s)\nu_\ell(ds_\ell)}\ind{\funcg_\ell(s')=\funcg_\ell(s)}\quad\text{ and }\quad\lambda_\ell(ds)=\nu_\ell(ds_\ell)\prod_{j\not =\ell}\#(ds_j)
\end{align*} 
with the counting measure $\#$. 
$\int\dens(s)\nu_\ell(ds_\ell)$ is negligible here since it cancels out in (\ref{eq:metgibbsacc}).
$\dist_\ell$ is essentially build from a kernel density w.r.t. $\nu_\ell$.
If we chose plain probabilities $\moveprob_\ell$ for $\ell\in\moves$, we may write 
$\acc_{s' s}^\ell=\min\left\{1,\frac{\dens(s)\densk_\ell(s, s')}{\dens( s')\densk_\ell(s', s)}\right\}$
for transitions $(s',s)$ that differ in coordinate $\ell$ only.

The Metropolis within Gibbs approach exhibits a tremendous potential and even captures particular post-hoc translations.
To see this, consider a translation function $\bo s:\states'\rightarrow\states$ between two measure spaces $(\states',\mcal{A}',\lambda')$ and $(\states,\mcal{A},\lambda)$. 
The corresponding post-hoc translation from $\states'$ to $\states$ applies $\bo s$ in a deterministic manner.
However, if $\bo s$ is not injective, the backward move is usually not deterministic.
In the following, we refer to this sort of moves as semi deterministic translations (SDT's).
In practice, SDT's are the most commonly applied post-hoc translations.

Define
\begin{align*}
\funcg(s)=\begin{cases}
\bo s(s)& s\in\states'\\
s& s\in\states
\end{cases}
\end{align*}
This yields Metropolis within Gibbs moves that mimic post-hoc translations.
In order to transition from $\states$ to $\states'$ we rely on the concrete form of $\prob{\rand X\in \LargerCdot\mid \funcg(\rand X)=g}$.
Even though transitions within one space and the same are not per se excluded here, we deem them as unsupported in this particular move.

In contrast to Lemma \ref{lem:post-hoc}, $\bo s$ is not subject to restrictions.
Lemma \ref{lem:metgibbs} ensures that the acceptance probabilities are correct as long as we work within measure spaces that are in line with $\prob{\rand X\in \LargerCdot\mid \funcg(\rand X)=g}$.
Nevertheless, insensible choices of $\bo s$ yield useless moves.
If, for example, the image of $\bo s$ is a null set under $\pi$, the probability of transitioning anywhere with this move is zero.
Similarly, if the image of a non-null set poses a null set, this non-null set remains practically inaccessible by this move.

The following lemma represents a reconciliation of Lemma \ref{lem:post-hoc} and \ref{lem:metgibbs}.
\begin{lemma}[Siems]
	\label{lem:metgibbspost-hoc}	
	Assume that there exist a sigma finite measure space $(\ustates,\mcal{B},\nu)$ and measurable functions $\bo u:\states'\rightarrow \ustates$ and $f:\states\times \ustates\rightarrow\mathbb{R}_{\geq 0}$ such that 
	\begin{align*}
	&\textbf{\emph{(I)}}\ \ (\bo s, \bo u):\states'\rightarrow\states\times \ustates \text{ is a bijection}\\
	&\textbf{\emph{(II)}}\ \ \lambda'\left((\bo s, \bo u)^{-1}(ds, du)\right)=f(s, u)\lambda\otimes\nu(ds, du)
	\end{align*}
	For $s\in\states$, we may write
	\begin{align}
	\label{eq:metgibpost}
	\prob{\rand X\in \LargerCdot\mid \funcg(\rand X)=s}\propto
	\dens(s)\delta_{s}(\LargerCdot)
	+\int \delta_{(\bo s, \bo u)^{-1}}(\LargerCdot)\dens\left((\bo s, \bo u)^{-1}\right) f(s,u)\nu(du)
	\end{align}	
\end{lemma}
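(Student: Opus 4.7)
The plan is to derive the claimed conditional distribution by computing the joint law of $(\rand X, \funcg(\rand X))$ and reading off its disintegration against $\pi\circ \funcg^{-1}$. The global state space is the mixture $\states\cup\states'$, on which $\pi$ has density $\dens$ against the mixture measure built from $\lambda$ and $\lambda'$. Since $\funcg$ equals the identity on $\states$ and $\bo s$ on $\states'$, any test functional $h$ on the state space and $g$ on $\states$ satisfies
\begin{align*}
\expec{h(\rand X)\,g(\funcg(\rand X))}
=\int_\states h(s)g(s)\dens(s)\lambda(ds)
+\int_{\states'} h(x)g(\bo s(x))\dens(x)\lambda'(dx).
\end{align*}

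Next, I would transport the second integral onto $\states\times\ustates$ via the bijection in \textbf{(I)}. Because $(\bo s,\bo u):\states'\to\states\times\ustates$ is a measurable bijection and, by \textbf{(II)}, the pushforward of $\lambda'$ has Radon--Nikodym density $f(s,u)$ against $\lambda\otimes\nu$, the change of variables $x=(\bo s,\bo u)^{-1}(s,u)$ gives
\begin{align*}
\int_{\states'} h(x)g(\bo s(x))\dens(x)\lambda'(dx)
=\int_\states g(s)\int_\ustates h\bigl((\bo s,\bo u)^{-1}\bigr)\dens\bigl((\bo s,\bo u)^{-1}\bigr)f(s,u)\nu(du)\,\lambda(ds).
\end{align*}
Collecting both contributions I obtain an expression of the form $\int_\states g(s)\,T_s(h)\,\lambda(ds)$, where $T_s$ is precisely the (unnormalised) measure on the right-hand side of~(\ref{eq:metgibpost}). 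Taking $h=\mathbf 1_{\LargerCdot}$ and comparing with the definition of the conditional distribution via disintegration then yields the claim, with the normalising constant being the $s$-slice of $\pi\circ\funcg^{-1}$ w.r.t.\ $\lambda$.

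The only non-routine point is the bookkeeping around the two ``sheets'' meeting at $\funcg^{-1}(s)$: the atom $\{s\}$ contributes $\dens(s)\delta_s$ against $\lambda$, whereas the fibre $\bo s^{-1}(\{s\})\subset\states'$ contributes a continuous part that, after the change of variables, is parametrised by $u\in\ustates$ against the reference $\nu$. Hypotheses \textbf{(I)} and \textbf{(II)} are exactly what is needed to glue these two pieces into a single measure dominated by $\lambda$ on the base of the disintegration; once the change-of-variables step is justified, the proportionality follows and no normalisation computation is required because the statement is formulated up to~$\propto$.
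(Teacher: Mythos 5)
Your proof is correct and follows essentially the same route as the paper's: both compute the pushforward $\pi\circ\funcg^{-1}$ by splitting $\pi$ over the two sheets $\states$ and $\states'$, apply the change of variables supplied by \textbf{(I)} and \textbf{(II)} to re-parametrise the fibre $\bo s^{-1}(\{s\})$ by $u\in\ustates$ against $\nu$, and identify the normalising constant as $\dens(s)+\int\dens((\bo s,\bo u)^{-1})f(s,u)\nu(du)$. The only cosmetic difference is that you derive the conditional from the joint law of $(\rand X,\funcg(\rand X))$ via test functions, whereas the paper verifies the disintegration identity directly for the stated formula.
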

\begin{proof}
We have to show that $\int_B \prob{\rand X\in A\mid \funcg(\rand X)=s} \pi\circ \funcg^{-1}(ds)=\pi(A\cap \funcg^{-1}(B))$ for all $B\in\mcal{A}$ and $A\in\mcal{A}\otimes\mcal{A}'$.
To this end, consider that
\begin{align*}
&\pi\circ\funcg^{-1}=\int_{\LargerCdot}\dens(s)\lambda(ds)+\int\delta_{\bo s}(\LargerCdot)\dens(s')\lambda'(ds')\\
&=\int_{\LargerCdot}\dens(s)\lambda(ds)+\int\delta_{s}(\LargerCdot)\dens((\bo s, \bo u)^{-1})f(u,s)\nu\otimes\lambda(du, ds)\\
&=\int_{\LargerCdot}\dens(s)\lambda(ds)
+\int_\LargerCdot \underbrace{\int\dens\left((\bo s, \bo u)^{-1}\right)f(u,s)\nu(du)}_{=\bcc(s)}\lambda(ds)
=\int_{\LargerCdot}\dens(s)+\bcc(s)\lambda(ds)
\end{align*}
This allows us to write
\begin{align*}
&\int_B \prob{\rand X\in A\mid \funcg(\rand X)=s}\left(\dens(s)+\bcc(s)\right)\lambda(ds)\\
&\overset{*}{=}\int_B \dens(s)\delta_{s}(A)+ \int \delta_{(\bo s, \bo u)^{-1}}(A)\dens\left((\bo s, \bo u)^{-1}\right) f(s,u)\nu(du)\lambda(ds)\\
&=\pi(A\cap B)
+\int\delta_s(B)\delta_{(\bo s, \bo u)^{-1}}(A)\dens\left((\bo s, \bo u)^{-1}\right) \lambda'\left((\bo s, \bo u)^{-1}(ds, du)\right)\\
&=\pi(A\cap B)
+\int\delta_{\bo s}(B)\delta_{s'}(A)\dens(s') \lambda'(ds')
=\pi(A\cap B)+\pi(A\cap \bo s^{-1}(B))=\pi(A\cap \funcg^{-1}(B))
\end{align*}
whereby in $*$, we have set the proportionality constant in (\ref{eq:metgibpost}) to $1\slash\left(\dens(s)+\bcc(s)\right)$.
\end{proof}

Consequently, in accordance with Lemma \ref{lem:metgibbs}, the proposal for transitioning from $s'\in\states'$ to $\states$ complies with $\delta_{\funcg(s')}$ and thus applies $\bo s$ deterministically. 
In turn, as in post-hoc translations, the backward move may exploit the form of $\int \delta_{(\bo s, \bo u)^{-1}}(\LargerCdot)\nu(du)$.
To this end, we employ a kernel density $\densk$ from $(\states,\mcal{A})$ to $(\ustates,\mcal{B}, \nu)$.
A small contemplation yields that the acceptance probability for transitions from $s'\in\states'$ to $\states$ reads
\begin{align*}
\min\left\{1,\frac{\dens(\bo s)\densk(\bo s, \bo u)}
{\dens(s')f(\bo s, \bo u)}\cdot\frac{\moveprob(\bo s)}{\moveprob(s')}\right\}
\end{align*}
with move probability $\moveprob$.
This is the same acceptance probability as in the corresponding post-hoc approach.

Please note that the Metropolis within Gibbs algorithm doesn't capture all post-hoc translations.
For translations that are not SDT's, there is hardly a suitable $\funcg$ available that yields the same sampling scheme.
To see this, consider the relation defined by the viable transitions under the two translation functions that belong to a move and its backward move.
$\funcg$ is build from the partition of the two state spaces derived from the implied equivalence relation.
However, for non-SDT's this partition can easily contain the spaces as a whole and is then useless.



\section{A changepoint example}
\label{chap:cpexample}

\todo{convergence}

In this section, we scrutinize our theory by means of a small \cp example.
To this end, we consider three different implementations for so-called birth and death moves which either add or remove \cps.
Firstly, we employ very plain and unsophisticated proposals.
Secondly, we apply ad-hoc translations that incorporate maximizers of certain partial likelihoods.
Finally, we use tightly adapted post-hoc translations in the fashion of the reversible jump algorithm.

It turns out that the ad-hoc and post-hoc approaches perform equally well on this example in terms of their computation times and acceptance rates.
In contrast, the plain approach performs poorly, which justifies the need of sophisticated ad-hoc and post-hoc translations.

\begin{figure}[ht]
	\includegraphics[width=1\linewidth]{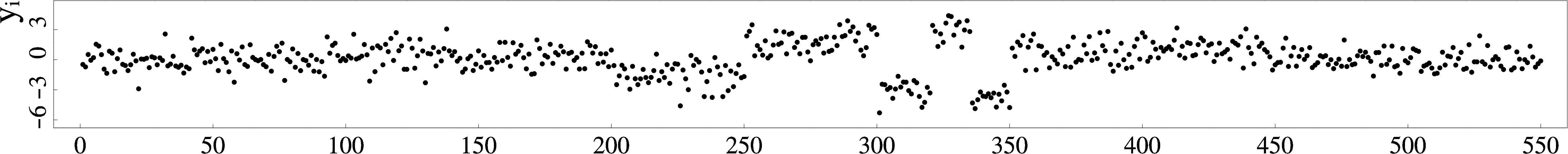}
	\caption{Gaussian change in expectation example dataset.}
	\label{pic:guassmhrj}
\end{figure}

Figure \ref{pic:guassmhrj} shows an artificial dataset.
The $n=550$ data points were drawn independently from a Gaussian distribution with variance 1 and expectations that where subject to 9 successive changes.
The overall 10 expectations and 9 \cp locations were chosen manually.

To build an exemplary Bayesian model here, we choose a prior for the changepoint locations and expectations: 
The time from one changepoint to the next is geometrically distributed with $q=3\slash550$ and the expectations in case of a jump and at the beginning are distributed according to $\mcal{N}(0, 25)$.
The data at time point $i=1,...,n$ also follows a Gaussian distribution with corresponding expectation at $i$ and variance equal to 1.

The sampling starts with no changepoints and an overall expectation of 0.
Subsequently, new changepoint locations may be found or discarded through \emph{birth} and \emph{death} moves.
Additionally, we employ \emph{shift} moves to shift single changepoints and \emph{adjust} moves to adjust single expectations.

Each move only manipulates a subset of the expectations or a single \cp and leaves the rest constant.
Whilst this could be implemented through post-hoc translations directly, ad-hoc translations do not support such moves on their own.
Either way, we apply the Metropolis within Gibbs framework since this is most convenient and requires no extra work.
Consequently, we build our proposals solely based on the quantities that are subject to a modification.

Please note, that the backward moves for adjust and shift are again adjust and shift.
Similarly, death and birth will be reversed by birth and death, respectively.
This arises naturally here, since each of the moves has its very own and unique complementary move, e.g. a birth can only be reversed by a death.

An adjust move relocates the old height $h'$ of a uniformly chosen segment according to $\mcal{N}(h', 0.5)$.
In turn, a shift move shifts the location of a uniformly chosen changepoint uniformly to a new position within the neighboring changepoints.

Let $\phi(x\semic \mu,\sigma^2)$ be the density of the univariate Gaussian distribution with expectation $\mu$ and variance $\sigma^2$ evaluated at $x$.
Adjusting the height $h'$ of a segment $S$ to $h$ yields an acceptance probability of
\begin{align*}
\min\left\{1,\quad \frac{\phi(h\semic 0,25)}{\phi(h'\semic 0,25)}\prod\limits_{j \in S}\frac{\phi(y_j\semic h, 1)}{\phi(y_j\semic h', 1)}\quad\right\}
\end{align*}
Note that the probabilities of choosing the shift move, the segment and the density values of the data points within the untouched segments, cancel out.
Furthermore, since the normal distribution is symmetric w.r.t. its expectation, the proposal densities cancel out as well.
What remains are the density values for the data points within the adjusted segment and the priors for the segment heights.

Consider $1$ and $n+1$ to be immutable auxiliary changepoints.
In the shift move, we shift an existing changepoint, say at location $i$, uniformly to a new location, say $j$, within the neighboring \cps.
Let $\ell$ and $k$ be the changepoints to the left and right of the changepoint at $i$ and let $h_1$ respectively $h_2$ be the corresponding segment heights.
The acceptance probability for the shift move reads
\begin{align*}
&\min\left\{1,\quad \frac{\prod\limits_{m=\ell}^{j-1} \phi(y_m\semic h_1, 1)\prod\limits_{m=j}^{k-1} \phi(y_m\semic h_2, 1)}{\prod\limits_{m=\ell}^{i-1} \phi(y_m\semic h_1, 1)\prod\limits_{m=i}^{k-1} \phi(y_m\semic h_2, 1)}\quad\right\}
\end{align*}
Here, the probabilities of choosing the move and the changepoint, the priors for the heights and the density values of the data within the untouched segments cancel out.

\subsection{A plain implementation of birth and death}
\label{chap:plainbd}
In the death move, we choose a changepoint uniformly, remove it and propose a new height for the remaining segment according to $\mcal{N}(0, 25)$.
In the birth move, we propose a new changepoint uniformly among the timepoints without a changepoint.
The two heights left and right of the new changepoint are then proposed independently according to $\mcal{N}(0, 25)$.

Let $p_d$ respectively $p_b$ be the probabilities to choose a death or a birth move, respectively.
We perform a death move.
Therefore, let $i$ be the timepoint of the changepoint that is to be removed and 
let $\ell$ and $k$ be the changepoints to the left and right of the changepoint at $i$ and let $h_1$ and $h_2$ be the corresponding segment heights.
Finally, let $h$ be the ensuing single segment height.
The acceptance probability reads
$\min\left\{1,\ \frac{n-c}{c+1}
\cdot\frac{p_b}{p_d}
\cdot L_{\ell, i, k}\right\}$,
whereby $c+1$ is the number of changepoints without $1$ and $n+1$ and 
\begin{align*}
L_{\ell,i, k}=\frac{\prod\limits_{m=\ell}^{k-1}\phi(y_m\semic h, 1)}{\prod\limits_{m=\ell}^{i-1} \phi(y_m\semic h_1, 1)\prod\limits_{m=i}^{k-1} \phi(y_m\semic h_2, 1)}
\end{align*}

In order to derive the acceptance probability for the corresponding birth move, we just take the reciprocal of the acceptance ratio.

\subsection{An ad-hoc implementation of birth and death}
\label{chap:ad-hocimpl}

Here, we utilize parts of the likelihood of the model.
The likelihood of the data within a single segment can be maximized w.r.t. its expectation just by choosing the mean of the involved data points.
Thus, by proposing new segment heights close to these means we might propose sensible states that yield good acceptance rates.
To this end, our sampling approach proposes new segment heights through a normal distribution with variance $0.01$ and expectation equal to the mean of the associated data.

Let $\tau^2=0.01$ and for $1\leq \ell\leq k\leq n$ define $\mu_{\ell:k}=\sum_{m=\ell}^{k-1}\frac{y_m}{(k-\ell)}$.
By using the notation of Section \ref{chap:plainbd}, the acceptance probability for the death move reads
\begin{align*}
\min\left\{1,\ 
\frac{n-c}{c+1}
\cdot\frac{p_b}{p_d}
\cdot L_{\ell,i, k}\cdot \frac{\phi(h\semic 0,25)}{\phi(h_1\semic 0,25)\phi(h_2\semic 0,25)}\cdot \frac{\phi(h_1\semic \mu_{l:i}, \tau^2)\phi(h_2\semic \mu_{i:k}, \tau^2)}
{\phi(h\semic \mu_{l:k}, \tau^2)}\quad\right\}
\end{align*}

\subsection{A post-hoc implementation of birth and death}
Now, we design birth and death moves by means of the reversible jump paradigm.
It is crucial hereby to find a carefully adapted diffeomorphism.
A good estimator for the expectation of the data within a segment is the mean of the observations belonging to it.
If $h$ is this mean and we split the segment, say from $\ell$ to $k$, into two parts through a \cp at $i$, such that the lengths of the resulting segments are $n_1$ and $n_2$ and their corresponding mean values are $h_1$ and $h_2$, we see that $h=\frac{n_1h_1+n_2h_2}{n_1+n_2}$.

Furthermore, in order to build a diffeomorphism thereof, we introduce an auxiliary variable $u$.
A sense of parsimony is achieved by stipulating that $u\in\mathbb{R}$.
We treat $u$ as part of the target space by setting $u=h_2$.
That way, $(h,u)$ describes a diffeomorphism with Jacobi determinant $\frac{n_1}{n_1+n_2}$.

The opportunity to embed the auxiliary variables directly into the target space often remains unnoticed in practical implementations. 
Instead, $u$ would be sampled independently which is prone to forfeit acceptance rate by proposing unsuited states.

This yields an SDT, whereby we rely on the reasoning of Section \ref{chap:ad-hocimpl} and draw $u$ from $\mcal{N}(m_{i,k}, \tau)$ in the birth move.
Therewith, the acceptance probability for the death move reads
$\min\left\{1,\ \frac{n_1}{n_1+n_2}
\cdot\frac{n-c}{c+1}\cdot\frac{p_b}{p_d}
\cdot L_{\ell,i, k}\cdot \frac{\phi(h\semic 0,25)}{\phi(h_1\semic 0,25)\phi(h_2\semic 0,25)}\cdot \phi(h_2, m_{i,k}, \tau) \right\}$

\subsection{Results and conclusions}
Now, we want to compare our proposals empirically in terms of their runtime and acceptance rates.
To this end, we set all four move probabilities to $0.25$, except in the boundary cases.
If there is only one segment, birth and adjust have the same probability of 0.75 and if each timepoint accommodates a changepoint then death and adjust have the probabilities 0.5 respectively 0.25.

\begin{table}[ht]
	\begin{center}	
		\begin{tabular}{ l || c | c | c | c}
			Acceptance rates & Death move & Birth move & Shift move & Adjust move   \\
			\hline\hline
			Plain proposals & 0.0021 & 0.0022 & 0.0681 & 0.2896\\
			Ad-hoc translations & 0.0588 & 0.0594 & 0.0678 & 0.2904\\			
			Post-hoc translations & 0.0639 & 0.0645 & 0.0681 & 0.2899\\
		\end{tabular}
	\end{center}
	\caption{Acceptance rates for particular moves.}
	\label{fig:fractable}
\end{table}

We computed $10^{7}$ successive links for each of the three Metropolis-Hastings approaches.
On my computer (AMD 64 with a 3200 GHZ CPU), the sampling algorithm for the plain birth and death proposals and the post-hoc translations needed 12 seconds and the ad-hoc approach took 13 seconds.

Table \ref{fig:fractable} shows the acceptance rates for the individual moves.
It becomes apparent, that the plain birth and death moves struggle with finding alternate changepoint configurations.
Thus, the Markov chain performs poorly in exploring the state space, which impairs the mixing time.

Our ad-hoc and post-hoc translations compare well with each other. 
Though, both exhibit very different strategies in proposing new segment heights.
Furthermore, several experiments revealed that an acceptance rate for birth and death moves of more than 6.5\% is infeasible as long as the new \cp locations are chosen in a plain uniform manner.
Thus, the achieved rates are sound, however, higher rates are usually considered as better \citep{bedard2008optimal}.

It became clear that sophisticated approaches like ad-hoc and post-hoc translations are key for sampling within mixed spaces.
Whilst our post-hoc proposals can be regarded as ambitiously developed, the ad-hoc proposals constitute a simple independence sampler that was derived from straightforward ideas.
This indicates the great potential of ad-hoc translations.
In turn, post-hoc translations help by reducing the number of random variables to be generated and may hence result in more tightly adapted proposals that exhibit a reduced computational runtime.

\section{The Metropolis-Hastings algorithm in abstract terms}
\label{chap:genmh}
So far, we have considered different strategies to implement the Metropolis-Hastings algorithm based on densities and measure spaces.
\cite{tierney1998}, in turn, provides theoretic foundations for the Metropolis-Hastings algorithm to work with $\pi$ and $\dist$ directly.
At the same time, it introduces the notion of a maximal Metropolis-Hastings algorithm.
Even though this concept remained widely unrecognized, it gives important advice on the design of proposals and allows for a classification of Metropolis-Hastings approaches.

In this section, we will summarize the main outcomes of \cite{tierney1998} and put them in line with the results of \cite{roodaki2011} for mixture proposals.
Subsequently, we examine maximality with regards to our own methods.

Given a measurable space $(\states,\mcal{A})$, for a set $A\in\mcal{A}\otimes \mcal{A}$ we define the \emph{transpose} of $A$ as $A^t=\{(s,s')\mid (s',s)\in A\}$.
We leave it to the reader to show that $A^t\in\mcal{A}\otimes \mcal{A}$ and that $\pi\otimes\dist(\LargerCdot)=\pi\otimes\dist(\LargerCdot^t)$ if and only if $\dist$ preserves the detailed balance of w.r.t. $\pi$.

In the following, we are given an arbitrary probability space $(\states,\mcal{A},\pi)$ and a Markov kernel $\dist$ from $(\states,\mcal{A})$ to $(\states,\mcal{A})$.
The idea is to use $\acc_{s' s}=\min\{1,\ratio_{s's}\}$ with $\ratio_{s's}=\frac{\pi\otimes\dist(ds,ds')}{\pi\otimes\dist(ds',ds)}$ as the acceptance probability in the Metropolis-Hastings algorithm.
$\ratio_{s's}$ is the density of $\pi\otimes\dist(\LargerCdot^t)$ w.r.t. $\pi\otimes\dist$ evaluated at $(s',s)\in\states\times\states$.
It is important to note that $\ratio_{s's}$ may not exist everywhere.
Furthermore, in order to prove detailed balance in the same fashion as before, we require that $\ratio_{s's}=1\slash\ratio_{ss'}$.

Let $\xi$ be a symmetric measure over $\mcal{A}\otimes \mcal{A}$ such that $\pi\otimes\dist$ and $\pi\otimes\dist(\LargerCdot^t)$ are absolutely continuous w.r.t. $\xi$, for example $\xi=\pi\otimes\dist+\pi\otimes\dist(\LargerCdot^t)$.
The theorem of Radon-Nikodym \citep{nikodym1930generalisation} ensures that the densities $\frac{\pi\otimes\dist(ds',ds)}{\xi(ds',ds)}$ and $\frac{\pi\otimes\dist(ds,ds')}{\xi(ds',ds)}$ exist.
\begin{definition}
	Let 
	\begin{align*}
	R=\Bigg\{(s',s)\bbmid\frac{\pi\otimes\dist(ds',ds)}{\xi(ds',ds)}>0, \frac{\pi\otimes\dist(ds,ds')}{\xi(ds',ds)}>0\Bigg\} 
	\end{align*}
\end{definition}

Thus, on $R\cap\mcal{A}\otimes\mcal{A}$ the density $\ratio_{s's}$ exist and equals the ratio of $\frac{\pi\otimes\dist(ds,ds')}{\xi(ds',ds)}$ and $\frac{\pi\otimes\dist(ds',ds)}{\xi(ds',ds)}$, and $\ratio_{s's}=1\slash\ratio_{ss'}$ holds.
This yields a Metropolis-Hastings algorithm in the fashion of Lemma \ref{lem:mh}.

\begin{lemma}[\cite{tierney1998}]
	\label{lem:genmh}	
	The transition kernel $\mu$ in (\ref{eq:mhtranskern}) with acceptance probability 
	\begin{align}
	\label{eq:accratio}
	\acc_{s' s}=\begin{cases}
	\min\{1,\ratio_{s's}\}& (s',s)\in R\\
	0& \text{otherwise}
	\end{cases}
	\end{align}
	preserves the detailed balance w.r.t. $\pi$.
	Furthermore, $\acc_{s' s}$ is $\pi\otimes\dist(\LargerCdot\cap R)$ almost surely maximal over all acceptance probabilities that preserve the detailed balance w.r.t. $\pi$.
\end{lemma}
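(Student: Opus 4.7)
The plan is to mirror the proof of Lemma \ref{lem:mh}, now using the Radon-Nikodym derivative $\ratio_{s's}$ in place of the pointwise density ratio. The pivotal identity to establish first is the symmetry of measures
\begin{equation*}
\acc_{s' s}\,\pi\otimes\dist(ds',ds)=\acc_{s s'}\,\pi\otimes\dist(ds,ds')
\end{equation*}
on $\mcal{A}\otimes\mcal{A}$. On $R$, I would unwind the definitions: since $\ratio_{s's}$ is the Radon-Nikodym density of $\pi\otimes\dist(\LargerCdot^t)$ w.r.t.\ $\pi\otimes\dist$, the left-hand side equals $\min\{\pi\otimes\dist(ds',ds),\pi\otimes\dist(ds,ds')\}$, which is manifestly invariant under swapping $s'$ and $s$ and hence matches the right-hand side. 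Off $R$, at least one of the two measures vanishes while $\acc$ has been declared zero, so both sides vanish trivially. Here I must be careful that $R^t=R$ (which follows immediately from its symmetric definition), so there is no asymmetry in which side is zero.

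Once this identity is in hand, detailed balance follows by the same disjoint-decomposition trick as in Lemma \ref{lem:mh}: reduce to $A\cap B=\emptyset$, write out $\pi\otimes\mu(A\times B)$ unpacking~(\ref{eq:mhtranskern}), note that the Dirac mass $\delta_{s'}$ contributes zero thanks to disjointness, then invoke Fubini together with the symmetry identity to flip to $\pi\otimes\mu(B\times A)$.

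For maximality, let $\tilde\acc_{s's}$ be any other $\mcal{A}\otimes\mcal{A}$-measurable acceptance function whose induced kernel preserves detailed balance. Running the same computation with $\tilde\acc$ in place of $\acc$ (and choosing $A,B$ freely) shows that the symmetry identity must hold $\xi$-a.e., and hence $\pi\otimes\dist$-a.e.\ on~$R$. On $R$ I can divide by $\pi\otimes\dist(ds',ds)$ to obtain the pointwise relation $\tilde\acc_{s's}=\tilde\acc_{ss'}\,\ratio_{s's}$ almost everywhere. Combined with the bound $\tilde\acc_{ss'}\leq 1$, this gives $\tilde\acc_{s's}\leq\ratio_{s's}$; together with the trivial bound $\tilde\acc_{s's}\leq 1$, I conclude $\tilde\acc_{s's}\leq\min\{1,\ratio_{s's}\}=\acc_{s's}$ almost surely on $R$.

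The main obstacle, I expect, is the transition from the integral statement of detailed balance to the pointwise a.e.\ identity $\tilde\acc_{s's}\,\pi\otimes\dist(ds',ds)=\tilde\acc_{ss'}\,\pi\otimes\dist(ds,ds')$. This requires varying $A$ and $B$ over enough sets to uniquely determine the measures, which in turn needs joint measurability of $\tilde\acc$ and $\sigma$-finiteness so that Radon-Nikodym densities are unique up to $\xi$-null sets. The rest of the argument is routine arithmetic in the two cases $\ratio_{s's}\leq 1$ and $\ratio_{s's}>1$.
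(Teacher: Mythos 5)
Your proposal is correct and follows essentially the same route as the paper: detailed balance via the symmetry of the measure $\acc_{s's}\,\pi\otimes\dist(ds',ds)$ exactly as in Lemma \ref{lem:mh}, and maximality via the identity $\tilde\acc_{s's}=\tilde\acc_{ss'}\,\ratio_{s's}$ a.e.\ on $R$ combined with $\tilde\acc\leq 1$. The only difference is that the paper imports that identity directly from \cite{tierney1998}, whereas you derive it yourself from the detailed balance condition; your version is therefore more self-contained but not a different argument.
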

\begin{proof}
	The detailed balance can be proven in the same fashion as Lemma \ref{lem:mh}.	
	From \cite{tierney1998}, we know that any suitable acceptance probability, $\hat \acc_{ss'}$, satisfies $\ratio_{s s'}\hat\acc_{s's}=\hat\acc_{ss'}$ almost surely w.r.t. $\pi\otimes\dist(\LargerCdot\cap R)$ and thus $\hat\acc_{ss'}=\ratio_{s' s}\hat\acc_{s's}\leq\min\{1, \ratio_{s' s}\}$.
\end{proof}

\begin{myremark}
In accordance with \cite{tierney1998}, the set $R$ is symmetric and uniquely defined up to null sets w.r.t. $\pi\otimes\dist$ and $\pi\otimes\dist(\LargerCdot^t)$.
It represents the set of possible transitions in forward and backward direction.
Furthermore, the density $\ratio_{s's}$ is $\pi\otimes\dist(\LargerCdot\cap R)$ almost surely uniquely defined.
Thus, we can speak of the \emph{maximal Metropolis-Hastings algorithm} for the proposal $\dist$ when we use $\acc_{s' s}$ as in (\ref{eq:accratio}).
\end{myremark}

The above formulation of the Metropolis-Hastings algorithm allows for the employment of arbitrary proposal distributions.
Remarkably, no underlying measure space or kernel density is required.
However, $R$ narrows down the set of possible translations.
In the extreme case, when it is empty, no transitions we will be accepted whatsoever.

In the following, in order to keep the notation uncluttered, we do not explicitly mention the set $R$ in the acceptance probability anymore.
Instead, we implicitly set the densities to zero wherever they are not defined.

\subsection{Mixture proposals in general and maximality in particular}
\label{chap:mixtures}

So far, we have build the acceptance probabilities for mixture proposals from unique pairs of forward and backward moves, i.e. $\ell$ and $\ellr$.
However, the computation of the acceptance probability for the maximal Metropolis-Hastings algorithm deploys the mixture proposal as a whole.
Consequently, if the two approaches are different, the pairwise one is prone to yield smaller acceptance rates (see also \cite{tierney1998}). 
Unfortunately, lower acceptance rates increase the dependencies of successive links, which, in turn, impairs the mixing time.

Nevertheless, the pairwise approach is an important measure of convenience.
It exonerates us from evaluating the full range of component proposals at each step.
This gives rise to the following lemma which provides sufficient conditions for the pairwise approach to satisfy the maximality.

\begin{lemma}[\cite{roodaki2011}]
	\label{lem:mixmh}
	If for all $\ell\in\moves$, there exist disjoint $Z_{\ell}\in\mcal{A}\otimes\mcal{A}$ with
	\begin{align*}
	&\textbf{(I) }\quad\int_{\states\times \states \backslash Z_\ell}\moveprob_\ell(s')\pi\otimes\dist_\ell(ds',ds)=0\\
	&\textbf{(II) }\quad Z_{\ellr}=Z_\ell^t
	\end{align*}	
	we get 
	\begin{align}
	\label{eq:mixpairdens}
	&\frac{\pi\otimes\dist(ds, ds')}{\pi\otimes\dist(ds', ds)}=\frac{\moveprob_{\ellr}(s)}{\moveprob_\ell(s')}\frac{\pi\otimes\dist_{\ellr}(ds, ds')}{\pi\otimes\dist_\ell(ds', ds)}	
	\end{align}	
	for $(s',s)\in Z_\ell$.
\end{lemma}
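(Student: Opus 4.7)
The plan is to unpack what the ratio $\frac{\pi\otimes\dist(ds,ds')}{\pi\otimes\dist(ds',ds)}$ means, namely the Radon--Nikodym density of $\pi\otimes\dist(\LargerCdot^t)$ with respect to $\pi\otimes\dist$, and then show that on each $Z_\ell$ both the numerator and denominator collapse to a single term of the mixture sum. The key mechanism is that the $Z_\ell$ are disjoint and condition \textbf{(I)} forces the measure $\moveprob_\ell(s')\,\pi\otimes\dist_\ell$ to be concentrated on $Z_\ell$; condition \textbf{(II)} is precisely what makes this ``single surviving term'' phenomenon compatible between the forward and the transposed direction.

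First, I would expand $\pi\otimes\dist=\sum_{k\in\moves}\moveprob_k(s')\,\pi\otimes\dist_k(ds',ds)$. For any $A\in\mcal{A}\otimes\mcal{A}$ with $A\subseteq Z_\ell$, the sum reduces to one term: indeed, if $k\neq\ell$, then $\moveprob_k(s')\,\pi\otimes\dist_k$ is supported on $Z_k$ (by \textbf{(I)}), and $Z_k\cap Z_\ell=\emptyset$, so
\begin{equation*}
\pi\otimes\dist(A)=\int_A \moveprob_\ell(s')\,\pi\otimes\dist_\ell(ds',ds) \qquad\text{for } A\subseteq Z_\ell.
\end{equation*}

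Next, I would do the analogous reduction for the transposed measure. If $A\subseteq Z_\ell$, then by \textbf{(II)} we have $A^t\subseteq Z_\ell^t=Z_{\ellr}$, so by the same disjointness/support argument only the $k=\ellr$ term contributes:
\begin{equation*}
\pi\otimes\dist(A^t)=\int_{A^t}\moveprob_{\ellr}(s')\,\pi\otimes\dist_{\ellr}(ds',ds).
\end{equation*}
Relabeling the dummy variables (i.e.\ swapping $s$ and $s'$ under the transpose) turns the right-hand side into $\int_A \moveprob_{\ellr}(s)\,\pi\otimes\dist_{\ellr}(ds,ds')$.

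Putting the two identities together, for every $A\subseteq Z_\ell$,
\begin{equation*}
\pi\otimes\dist(A^t)=\int_A \moveprob_{\ellr}(s)\,\pi\otimes\dist_{\ellr}(ds,ds')\quad\text{and}\quad\pi\otimes\dist(A)=\int_A \moveprob_\ell(s')\,\pi\otimes\dist_\ell(ds',ds),
\end{equation*}
so the Radon--Nikodym density of $\pi\otimes\dist(\LargerCdot^t)$ with respect to $\pi\otimes\dist$ on $Z_\ell$ is the ratio of the integrands, which is exactly the claimed formula (\ref{eq:mixpairdens}). The only real care needed is in the transpose/relabeling step and in being explicit that the densities are defined only $\pi\otimes\dist(\LargerCdot\cap R)$ almost surely, as in Lemma \ref{lem:genmh}; beyond that the argument is bookkeeping around disjointness and support.
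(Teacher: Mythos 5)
Your proposal is correct and is essentially the paper's own argument: both reduce the mixture sums $\pi\otimes\dist$ and $\pi\otimes\dist(\LargerCdot^t)$ on $Z_\ell$ to the single surviving terms $\moveprob_\ell(s')\,\pi\otimes\dist_\ell$ and $\moveprob_{\ellr}(s)\,\pi\otimes\dist_{\ellr}$ using disjointness together with \textbf{(I)} and \textbf{(II)}, and then read off the Radon--Nikodym ratio. The only difference is presentational: you integrate over test sets $A\subseteq Z_\ell$ where the paper works directly in differential notation.
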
 
\begin{proof}
	Define $\ratio_{s's}^\ell=\frac{\moveprob_{\ellr}(s)}{\moveprob_\ell(s')}\frac{\pi\otimes\dist_{\ellr}(ds, ds')}{\pi\otimes\dist_\ell(ds', ds)}$. 
	For $(s',s)\in Z_\ell$ we get
	\begin{align*}
	&\ratio_{s's}^\ell\pi\otimes \dist(ds',ds)
	=\ratio_{s's}^\ell\moveprob_\ell(s')\pi\otimes \dist_\ell(ds',ds)
	=\moveprob_{\ellr}(s)\dist_{\ellr}(ds,ds')=\pi\otimes \dist(ds,ds')
	\end{align*}
	This holds since \textbf{(I)} and \textbf{(II)} state that we can set $\moveprob_k(s')\pi\otimes \dist_k(ds',ds)=\moveprob_\kr(s)\pi\otimes \dist_\kr(ds,ds')=0$ for all $k\not =\ell$.
\end{proof}

Having said that, \cite{roodaki2011} wanted to make a different statement with their theorem.
They tried to elaborate upon when it is possible to perform the Metropolis-Hastings algorithm solely based on pairs of moves.
Therewith, they appear to have missed that this is always viable as long as the pairs are unique \citep{tierney1998}.

Under the assumption that the conditions \textbf{(I)} and \textbf{(II)} of Lemma \ref{lem:mixmh} are met, we may ask for the maximality of the approaches considered in this paper.
In most cases, it is sufficient to show that for each move $\ell$ the acceptance ratio of our acceptance probability corresponds to the l.h.s. of (\ref{eq:mixpairdens}).
This is straightforward for the primal Metropolis-Hastings algorithm and ad-hoc translations, and also for (\ref{eq:accdiscpost}) and (\ref{eq:metgibbsacc}).

However, post-hoc translations that accept within the auxiliary space are deviant.
In the context of Lemma \ref{lem:post-hoc}, we define 
\begin{align*}
\ratio_{s' u'}^\ell=\frac{\dens(\bo s_{\ell})\densk_{\ellr}((\bo s, \bo u)_{\ell})}{\dens(s')\densk_{\ell}(s',u')}\cdot f_{\ell}(s',u')
\end{align*}
Furthermore, let $\dist_{\ell}(s',\LargerCdot)=\int \delta_{\bo s_{\ell}}(\LargerCdot)\densk_{\ell}(s', u') \nu_\ell(du')$ be the proposal of move $\ell$.

\begin{lemma}[Siems]
\label{lem:rvjproposdens}
For move $\ell$ with transitions from $s'\in\states_j$ to $s\in\states_i$, we get
\begin{align*}
\frac{\pi\otimes\dist_{\ellr}(ds,ds')}{\pi\otimes\dist_{\ell}(ds',ds)}=\expec{\ratio_{s' \rand U}^\ell\bmid \bo s_{\ell}(s',\rand U)=s}
\end{align*}
whereby $\rand U$ is a random variable distributed according to $\int_\LargerCdot\densk_{\ell}(s', u')\nu_\ell(du')$.
\end{lemma}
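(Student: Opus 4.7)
The plan is to write both sides as integrals against a common measure lifted to $\states_j\times\ustates_\ell$, use the bijection afforded by $(\bo s,\bo u)_\ell$, and then project down to $\states_j\times\states_i$ by disintegrating the law induced by $\bo s_\ell$. For the denominator, since $\dist_\ell(s',\LargerCdot)$ is by construction the pushforward of $\densk_\ell(s',u')\,\nu_\ell(du')$ under $u'\mapsto\bo s_\ell(s',u')$, I would write, for any nonnegative measurable $\phi$ on $\states_j\times\states_i$,
\begin{equation*}
\int \phi(s',s)\,\pi\otimes\dist_\ell(ds',ds)=\int_{\states_j}\dens(s')\lambda_j(ds')\int_{\ustates_\ell}\phi(s',\bo s_\ell(s',u'))\,\densk_\ell(s',u')\,\nu_\ell(du').
\end{equation*}

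For the numerator, I would begin with the analogous expansion of $\int\phi(s',s)\,\pi\otimes\dist_\ellr(ds,ds')$ as an integral over $\states_i\times\ustates_\ellr$, substitute $(s,u)=(\bo s,\bo u)_\ell(s',u')$ using the bijectivity from condition \textbf{(I)} of Lemma \ref{lem:post-hoc}, transport $\lambda_i\otimes\nu_\ellr$ via the density $f_\ell(s',u')$ of condition \textbf{(II)}, and simplify using $\bo s_\ellr((\bo s,\bo u)_\ell(s',u'))=s'$. This is the same manipulation carried out in the proof of Lemma \ref{lem:post-hoc} and yields
\begin{equation*}
\int \phi(s',s)\,\pi\otimes\dist_\ellr(ds,ds')=\int_{\states_j}\dens(s')\lambda_j(ds')\int_{\ustates_\ell}\phi(s',\bo s_\ell(s',u'))\,\ratio_{s'u'}^\ell\,\densk_\ell(s',u')\,\nu_\ell(du').
\end{equation*}
So, once lifted to $\states_j\times\ustates_\ell$, the transposed backward measure is obtained from the forward measure by multiplication with $\ratio_{s'u'}^\ell$.

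To finish, I would project both representations down to $\states_j\times\states_i$. For fixed $s'$, let $\nu_{s'}$ denote the law of $\bo s_\ell(s',\rand U)$ with $\rand U\sim\densk_\ell(s',\LargerCdot)\,\nu_\ell$. Since all measure spaces are standard Borel, the disintegration theorem supplies a regular version of the conditional expectation together with the identity
\begin{equation*}
\int g(u')\,\densk_\ell(s',u')\,\nu_\ell(du')=\int \expec{g(\rand U)\mid \bo s_\ell(s',\rand U)=s}\,\nu_{s'}(ds),
\end{equation*}
valid for every nonnegative measurable $g$. Applying this with $g(u')=\phi(s',\bo s_\ell(s',u'))$ and with $g(u')=\phi(s',\bo s_\ell(s',u'))\,\ratio_{s'u'}^\ell$ shows that both $\pi\otimes\dist_\ell(ds',ds)$ and the transposed $\pi\otimes\dist_\ellr(ds,ds')$ share the common disintegration $\dens(s')\lambda_j(ds')\,\nu_{s'}(ds)$, the latter with the extra factor $\expec{\ratio_{s'\rand U}^\ell\mid \bo s_\ell(s',\rand U)=s}$. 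Dividing yields the asserted Radon--Nikodym identity.

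The genuine obstacle is the last step: the fiber $\{u'\mid\bo s_\ell(s',u')=s\}$ is typically lower dimensional (in the classical reversible jump setting often a singleton inside $\ustates_\ell$), so the conditional expectation must be read as the regular version furnished by the disintegration theorem rather than as a naive ratio of densities. The standard Borel assumption recorded at the beginning of the paper is what licenses this, and it also ensures that the resulting density is unique up to $\nu_{s'}$-null sets, which is precisely the sense in which the claimed Radon--Nikodym identity is to be understood.
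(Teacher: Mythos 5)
Your proposal is correct and follows essentially the same route as the paper's own proof: the pushforward expansion of $\dist_\ell$, the change of variables via $(\bo s,\bo u)_\ell$ with the density $f_\ell$ from conditions \textbf{(I)} and \textbf{(II)} of Lemma \ref{lem:post-hoc}, and the defining property of conditional expectation (which the paper invokes directly where you phrase it as a disintegration of the law of $\bo s_\ell(s',\rand U)$). The only differences are presentational — you test against a general $\phi$ and run the chain of equalities from the two measures toward the conditional expectation, whereas the paper starts from the conditional expectation and integrates it back up to $\pi\otimes\dist_\ellr(B\times A)$ — and your closing remark on regular conditional versions makes explicit a point the paper leaves to its standing standard-Borel assumption.
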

\begin{proof}
This holds since for $A\in\mathcal{A}_j$ and $B\in\mathcal{A}_i$
\begin{align*}
&\int_A \int_B \expec{\ratio_{s' \rand U}^\ell\bmid \bo s_{\ell}(s',\rand U)=s}\dist_{\ell}(s',ds)\pi(ds')\\
&=\int_A \int \expec{\ratio_{s' \rand U}^\ell\bmid \bo s_{\ell}(s',\rand U)=\bo{s}_{\ell}(s', u')}\delta_{\bo{s}_{\ell}}(B)\densk_{\ell}(s',u')\nu_\ell(du')\pi(ds')\\
&\overset{*}{=}\int_A \int \delta_{\bo{s}_{\ell}}(B)\ratio_{s' u'}^\ell\densk_{\ell}(s',u')\dens(s')\nu_\ell(du')\lambda(ds')\\
&=\int_A\delta_{\bo s_{\ell}}(B) \int_B \dens(\bo s_{\ell})\densk_{\ellr}((\bo s, \bo u)_{\ell})f_{\ell}(s',u')\nu_\ell(du')\lambda(ds')\\
&\overset{**}{=}\int_B \int \delta_{\bo s_{\ellr}}(A)\densk_{\ellr}(s,u)\dens(s)\nu_\ellr(du)\lambda(ds)=\pi\otimes\dist_{\ellr}(B\times A)
\end{align*}
whereby we have used the basic properties of the conditional expectation in $*$ and condition \textbf{(I)} and \textbf{(II)} of Lemma \ref{lem:post-hoc} in **.
\end{proof}

Thus, the acceptance probability for move $\ell$ of the maximal Metropolis-Hastings algorithm for the post-hoc translations of Lemma \ref{lem:post-hoc} reads
\begin{align*}
\acc_{s's}^\ell=\min\left\{1,\ \expec{\ratio_{s' \rand U}^\ell\cdot \frac{\moveprob_{\ellr}(s)}{\moveprob_{\ell}(s')}\bmid \bo s_{\ell}(s',\rand U)=s}\right\}
\end{align*}
This states that given a previous link $s'$, we accept or reject the newly proposed state $s$ based on an expectation drawn from $\ratio_{s' u'}$ over those values of $u'$ that yield $\bo s_{ji}(s',u')=s$.
If $\bo s_{\ell}(s',\LargerCdot)$ is injective for all $\ell$, $\ratio_{s' u'}^\ell=\expec{\ratio_{s' \rand U}^\ell\bmid \bo s_{\ell}(s',\rand U)=\bo s_{ji}(s',u')}$ holds and the post-hoc approach is maximal.
This is guaranteed for SDT's in the fashion of Lemma \ref{lem:post-hoc} and \ref{lem:metgibbspost-hoc}.

On the downside, consider a transition $(s',s)$ where the maximal Metropolis-Hastings algorithm exhibits an acceptance probability of one, i.e. $\int \ratio_{s' u'}\prob{\rand U\in du'\mid \bo s_{ji}(s',\rand U)=s}\geq 1$.
The corresponding post-hoc translation approach accepts $(s',s)$ with probability one if and only if $1=\prob{\ratio_{s' \rand U}\geq 1\mid \bo s_{ji}(s',\rand U)=s}$.
In non-trivial cases, these are two very distinct conditions that may lead to significantly different results.

This suggests that post-hoc translations in the fashion of Lemma \ref{lem:post-hoc} should preferably employ translations that meet the mentioned injectivity.  
This highlights the expediency of SDT's. 
However, in some particular cases, it might be reasonable to give up on maximality in order to address tractability issues with compound proposals (see also Section \ref{chap:mixspaces}).

\section{Discussion and literature review}
\label{chap:discussion}

In this paper, we examine several practical and theoretical approaches to derive acceptance probabilities for the Metropolis-Hastings algorithm.
The common challenge hereby is the derivation of compatible densities for the transitions in forward and backward direction.
While this remains invisible in simple cases, sampling across collections of different measure spaces often requires an explicit treatment.

Ad-hoc translations pose the easiest example since they just require the individual proposals to comply with the measure of their target space.
In turn, post-hoc translations demands a more elaborate integral transformation to be viable.
Finally, Metropolis within Gibbs calls for reasonable representations of conditional versions of the target distribution.

The consideration of the maximal Metropolis-Hastings algorithm of \cite{tierney1998} revealed that almost all our methods are of the same kind.
Only post-hoc translations that accept and reject within an auxiliary space are different if a certain injectivity condition is not met.


All considered algorithms make intense use of mixture proposals.
They feature a sophisticated design in a modular fashion and are therefore of high practical value.
Furthermore, it is beneficial to compute the acceptance probabilities solely based on unique pairs of the moves.
However, an overlap in their support may forfeit maximality here.

So far, we have only considered mixture proposals w.r.t. a countable set of moves.
However, the set of moves can alternatively be part of any measure space, whereby the move probabilities are replaced by a kernel density.
Consequently, summation over mixture components is replaced by integration.
As before, we need to define unique pairs of forward and backward moves.
An analogy to this is a post-hoc translation having an auxiliary variable that represents the move with an associated auxiliary function that determines the corresponding backward move.

Ultimately, our purposive contemplation of ad-hoc as well as post-hoc translations, and Metropolis within Gibbs opens up innovative ways of choosing sampling schemes that satisfy various demands regarding simplicity and sophistication.
Furthermore, we help resolving the ubiquitous confusions around the reversible jump framework.

In the remainder of this section, we discuss a sequence of far-reaching errors in reasoning, which all agree that ordinary MCMC methods struggle with trans dimensional sampling.

\cite{green1995} concludes wrongly that we have to use dimension matching  in order to pursue trans dimensional moves.
On page 715 in \cite{green1995}, the acceptance probability solely utilizes single components of the mixture proposal which implies that the density used to perform a move must also be used for the corresponding backward move.
For trans dimensional moves, this is obviously not possible and made it inevitably necessary to introduce the dimension matching condition.
Thus, Green's undeniably great idea appears to be the result of a simple error in reasoning.

\cite{carlin1995} claim, by referring to \cite{tierney1994}, that trans dimensional moves create absorbing states and therefore violate the convergence of Markov chains.
Unfortunately, \cite{tierney1994} doesn't seem to provide this statement.
In order to circumvent this alleged problem, they propose to work on the product space $\prod_{i\in\spaces}\states_i$ instead.
Frankly speaking, this adds a huge burden just to avoid the inhomogeneous nature of the spaces.
According to Google schoolar, \cite{carlin1995} was cited over 1000 times, though a few of them question the necessity of this approach, e.g. \cite{godsill2001, green2003, green2009, green1998model}.

In a more measure theoretic context, \cite{chen2012} comes to the conclusion that usual MCMC cannot transition across spaces of different dimensions.
They argue with the lack of dominating measures and therewith ignore the fact that each countable collection of sigma finite measures indeed exhibits a common dominating measure.

As we have seen, the acceptance probability of the reversible jump algorithm is not necessarily maximal in the sense of Lemma \ref{lem:genmh}.
Interestingly, \cite{green1995} still refers to \cite{peskun1973optimum} in order underline the maximality of his choice of acceptance probability.

These misperceptions have introduced a significant burden to the field of trans dimensional sampling.
The concerning literature is difficult to overview due to inconsistent claims and full of poor mathematical language as a result of the absurd overemphasis on dimensionality.

Furthermore, the name "reversible jump" stems from Green's wrong conviction that he constructed a method which overcomes the non-reversibility of trans dimensional moves in the Metropolis-Hastings algorithm.
Thus, this name lacks a proper meaning and collides awkwardly with "reversible MCMC" making this topic even harder to grasp for newcomers.

Unfortunately, due to the outshining popularity of the reversible jump algorithm, the unprepared user of mixed spaces is forced to understand and use diffeomorphisms and their Jacobi determinants.
Furthermore, the diffeomorphisms need to fit very closely to the employed model and application in order to be rewarding.
However, it is by no means clear if this is per se appropriate in all application scenarios.


\bibliographystyle{chicago}      
\bibliography{../bibtex}   


\end{document}